\def\R{\mathcal{R}}
\def\P{\mathcal{P}}
\def\O{\mathcal{O}}
\def\DR{{\textbf{D}}_\R}
\def\DO{{\textbf{D}}_\O}
\def\DDR{{D}_\R}
\def\DDO{{D}_\O}
\newtheorem{theorem}{Theorem}
\newtheorem{definition}{Definition}
\newtheorem{example}{Example}
\newtheorem{lemma}[theorem]{Lemma}
\newtheorem{proposition}[theorem]{Proposition}
\newtheorem{problem}{Problem}
\newtheorem{remark}{Remark}
\title{\LARGE \bf Optimal co-design of control, scheduling and routing\\
in multi-hop control networks}
\author{F. Smarra, A. D'Innocenzo and M.D. Di Benedetto
\thanks{The authors are with the Department of Electrical and Information Engineering, Center of Excellence DEWS, University of L'Aquila, Italy. The research leading to these results has received funding from the European Union Seventh Framework Programme [FP7/2007-2013] under grant agreement n°257462 HYCON2 Network of excellence}}
\begin{document}

\maketitle
\thispagestyle{empty}
\pagestyle{empty}


\begin{abstract}
A Multi-hop Control Network consists of a plant where the communication between sensors, actuators and computational units is supported by a (wireless) multi-hop communication network, and data flow is performed using scheduling and routing of sensing and actuation data. Given a SISO LTI plant, we will address the problem of co-designing a digital controller and the network parameters (scheduling and routing) in order to guarantee stability and maximize a performance metric on the transient response to a step input, with constraints on the control effort, on the output overshoot and on the bandwidth of the communication channel. We show that the above optimization problem is a polynomial optimization problem, which is generally NP-hard. We provide sufficient conditions on the network topology, scheduling and routing such that it is computationally feasible, namely such that it reduces to a convex optimization problem.
\end{abstract}


\section{Introduction} \label{secIntro}

Wireless networked control systems are spatially distributed control systems where the communication between sensors, actuators, and computational units is supported by a shared wireless communication network. Control with wireless technologies typically involves multiple communication hops for conveying information from sensors to the controller and from the controller to actuators. The main motivation for studying such systems is the emerging use of wireless technologies in control systems (see e.g. \cite{akyildiz_wireless_2004}, \cite{SongIECON2010} and references therein). The use of wireless Multi-hop Control Networks (MCNs) in industrial automation results in flexible architectures and generally reduces installation, debugging, diagnostic and maintenance costs with respect to wired networks. Although MCNs offer many advantages, their use for control is a challenge when one has to take into account the joint dynamics of the plant and of the communication protocol. Wide deployment of wireless industrial automation requires substantial progress in wireless transmission, networking and control, in order to provide formal models and verification/design methodologies for MCNs. In particular, co-design of the control algorithm and of the network configuration for a MCN requires addressing issues such as scheduling and routing using real communication protocols.

Recently, a huge effort has been made in scientific research on Networked Control Systems (NCSs), see \cite{Zhang2001},~\cite{WalshCSM2001},~\cite{SpecialIssueNCS2004},~\cite{Arzen06},~\cite{Hespanha2007},~\cite{MurrayTAC2009},~\cite{HeemelsTAC10}, \cite{HeemelsTAC11} and references therein for a general overview. In the current research on the interaction between control networks and communication protocols, most efforts in the literature focus on scheduling message and sampling time assignment for sensors/actuators and controllers interconnected by wired common-bus networks, e.g. \cite{Astrom97j1},~\cite{walsh_stability_2002},~\cite{AlHammouri2006},~\cite{TabbaraTAC2007}, and \cite{TabbaraTAC2008}.

In general, the literature on NCSs addresses non--idealities (such as quantization errors, packets dropouts, variable sampling and delay and communication constraints) as aggregated network performance variables, losing irreversibly the dynamics introduced by scheduling and routing communication protocols. What is needed for modeling and analyzing control protocols on multi hop control networks is an integrated framework for analysing/co-designing network topology, scheduling, routing and control. In~\cite{Andersson:CDC05}, a simulative environment of computer nodes and communication networks interacting with the continuous-time dynamics of the real world is presented. To the best of our knowledge, the only formal model of a MCN has been presented in \cite{AlurTAC11}, where the modeling and stability verification problem has been addressed for a MIMO LTI plant embedded in a MCN, with the controller already designed. A mathematical framework has been proposed, that allows modeling the MAC layer (communication scheduling) and the Network layer (routing) of the recently developed wireless industrial control protocols, such as WirelessHART (\texttt{www.hartcomm2.org}) and ISA-100 (\texttt{www.isa.org}). The mathematical framework defined in \cite{AlurTAC11} is compositional, namely it is possible to exploit compositional operators of automata to design scalable scheduling and routing for multiple control loops closed on the same multi-hop communication network. In \cite{DiBenedettoIFAC11Stab}, we extended the MCN definition to model redundancy in data communication (i.e. sending sensing and actuation data through multiple paths), with the aim of rendering the system fault tolerant with respect to link failures and robust with respect to packet losses. In \cite{PappasCDC2011} a MCN is defined as an autonomous system where the wireless network \emph{itself} acts as a fully decentralized controller, and differs from the model we use in \cite{DiBenedettoIFAC11Stab} and in this paper, where the wireless network \emph{transfers} sensing and actuation data between a plant and a centralized controller. Moreover, in our model we explicitly take into account the effect of the scheduling ordering of the node transmissions in the sensing and actuation data relay, which provides more information when modeling the transient response of the system.

\begin{figure}[ht]
\begin{center}
\vspace{-0.4cm}
\includegraphics[width=0.5\textwidth]{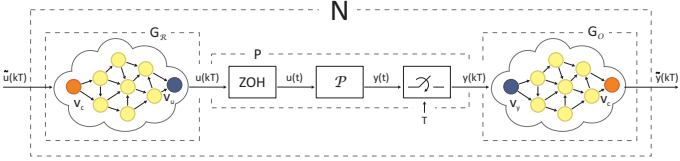}
\vspace{-0.5cm}
\caption{Multi-hop Control Network scheme.} \label{MCNscheme}
\vspace{-0.5cm}
\end{center}
\end{figure}

In this paper we consider a MCN that consists of a controllable and observable continuous-time LTI SISO plant interconnected to a digital controller via two (wireless) multi-hop communication networks, as illustrated in Figure \ref{MCNscheme}. An example of such networked control scheme is the mine application investigated in \cite{D'InnocenzoCASE2009}, where an industrial plant is connected to a controller via a wireless multi-hop communication network. Since we assume that the data transfer is performed using multiple paths and merging redundant data by linear combination, the communication networks will be characterized by LTI SISO dynamics that depend on the network configuration (topology, scheduling and routing). In \cite{DiBenedettoIFAC11Stab} we showed that, no matter how complex is the topology of the network, it is always possible to design a controller and a network configuration (scheduling and routing) that guarantees asymptotic stability of the networked closed loop system. In this paper we address the problem of co-designing a digital controller and the network parameters (scheduling and routing) in order to guarantee stability and maximize a performance metric on the transient response to a step input. More precisely, we will minimize the $\mathcal L_2$ norm of the error signal while requiring the closed loop system to follow a step reference with zero steady state error in finite time (deadbeat control), with constraints on the maximum overshoot in the input and output signal of the plant and on the maximum data rate on the communication links.

We first assume that topology and scheduling are given and show that the above optimization is a polynomial optimization problem, which is generally NP-hard, and we provide sufficient conditions on the network parameters such that it reduces to a convex optimization problem. Then, we consider the scheduling as a design variable, and we exploit the above optimization problem to define a total ordering induced by the $\mathcal L_2$ norm of the error signal on the finite set of scheduling functions. This allows to compute the optimal scheduling with a combinatorial computational complexity with respect to the number of communication links of the network. In future extensions of this paper we aim at providing algorithms to reduce such computational complexity.

The paper is organized as follows. In Section \ref{secMCNModeling} we provide the model of a MCN, while in Section \ref{secMCNDataRateDef} we define the transmission data rate of each node. In Section \ref{secCoDesignProblem} we define the co-design optimization problem and provide the main results of the paper.


\section{Modeling of MCN} \label{secMCNModeling}

To allow systematic methods for designing the communication protocol configuration in a MCN, a mathematical model of the effect of scheduling and routing on the control system is needed. Definition \ref{defMCN} allows modeling time-triggered communication protocols that specify TDMA, FDMA and/or CDMA access to a shared communication resource, for a set of communication nodes interconnected by an arbitrary acyclic radio connectivity graph. In particular, it allows modeling wireless multi-hop communication networks that implement protocols such as WirelessHART and ISA-100.
\begin{figure}[ht]
\begin{center}
\vspace{-0.3cm}
\includegraphics[width=0.3\textwidth]{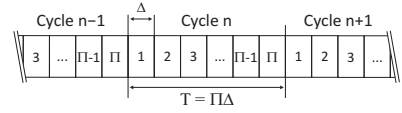}
\vspace{-0.3cm}
\caption{Time-slotted structure of frames.}\label{frame}
\vspace{-0.6cm}
\end{center}
\end{figure}

In these standards the access to the shared communication channel is specified as follows: time is divided into slots of fixed duration $\Delta$, and groups of $\Pi$ time slots are called frames of duration $T = \Pi \Delta$ (see Figure \ref{frame}). For each frame, a communication scheduling allows each node to transmit data only in a specified time slot. The scheduling is periodic with period $\Pi$, i.e. it is repeated in all frames.

\begin{definition}\label{defMCN}
A SISO Multi-hop Control Network is a tuple $\mathcal{N} = ( \P, G_\R, \eta_{\R}, G_\O, \eta_{\O}, \Delta)$ where:
\begin{itemize}
\item $\P(s)$ is a transfer function that models the dynamics of a continuous-time SISO LTI system.

\item $G_\R = ( V_{\R},E_{\R}, W_{\R} )$ is the controllability radio connectivity acyclic graph, where the vertices correspond to the nodes of the network, and an edge from $v$ to $v'$ means that $v'$ can receive messages transmitted by $v$ through the wireless communication link $(v,v')$. We denote $v_{c}$ the special node of $V_{\R}$ that corresponds to the controller, and $v_u \in V_{\R}$ the special node that corresponds to the actuator of the input $u$ of $\P$. The weight function $W_{\R} : E_{\R} \to \mathbb R$ associates to each link a real constant. The role of $W_{\R}$ will be clear in the following definition of $\eta_{\R}$.

\item $\eta_{\R} \colon \mathbb N \to 2^{E_{\R}}$ is the controllability scheduling function, that associates to each time slot of each frame a set of edges of the controllability radio connectivity graph. Since in this paper we only consider a periodic scheduling that is repeated in all frames, we define the controllability scheduling function by $\eta_{\R} \colon \{1, \ldots, \Pi\} \to 2^{E_{\R}}$. The integer constant $\Pi$ is the period of the controllability scheduling. The semantics of $\eta_{\R}$ is that $( v,v' ) \in \eta(h)$ if and only if at time slot $h$ of each frame the data content of the node $v$ is transmitted to the node $v'$. We assume that each link can be scheduled only one time for each frame. This does not lead to loss of generality, since it is always possible to obtain an equivalent model that satisfies this constraint by appropriately splitting the nodes of the graph, as already illustrated in the memory slot graph definition of \cite{AlurTAC11}.


\item $G_\O = ( V_{\O},E_{\O}, W_{\O} )$ is the observability radio connectivity acyclic graph, and is defined similarly to $G_\R$. We denote with $v_{c}$ the special node of $V_{\O}$ that corresponds to the controller, and $v_y \in V_{\O}$ the special node that corresponds to the sensor of the output $y$ of $\P$.

\item $\eta_{\O} \colon \{1, \ldots, \Pi\} \to 2^{E_{\O}}$ is the observability scheduling function, and is defined similarly to $\eta_{\R}$. We remark that $\Pi$ is the same period as the controllability scheduling period.


\item $\Delta$ is the time slot duration. As a consequence, ${T} = \Pi \Delta$ is the frame duration.
\end{itemize}
\end{definition}

\begin{figure}[ht]
\begin{center}
\vspace{-0.5cm}
\includegraphics[width=0.4\textwidth]{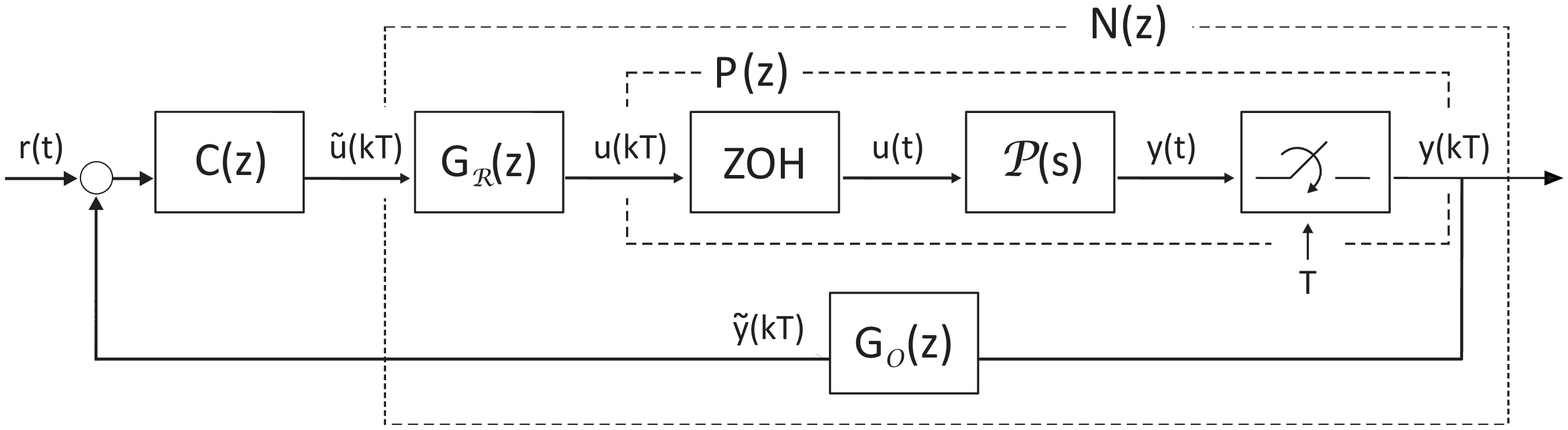}
\vspace{-0.3cm}
\caption{MCN closed loop control scheme.} \label{MCNblocks}
\vspace{-0.5cm}
\end{center}
\end{figure}


For any given radio connectivity graph that models the communication range of each node, designing a scheduling function induces a communication scheduling (namely the time slot when each node is allowed to transmit) and a routing (namely the set of paths that convey data from the input to the output of the connectivity graph) of the communication protocol. Since the scheduling function is periodic the induced communication scheduling is periodic, and the induced routing is static. The above definition allows modeling redundancy in data communication, i.e. sending the same control data through multiple paths in the same frame. This kind of redundancy is called \emph{multi-path routing} (or \emph{flooding}, in the \emph{communication} scientific community), and aims at rendering the MCN robust with respect to node failures and packet losses. To define the semantic of the data-flow in the networks we assume that each communication node computes a linear combination of the data received from the incoming links according to a weight function (i.e. $W_{\R}$ or $W_{\O}$), and forwards these data to the next links. Note that this forwarding protocol does not require the use of a reordering protocol (such as TCP) and of a timestamp in the packets to indicate how recent is the data. This reduces the complexity of the protocol and the bit length of each packet (and thus the data traffic in the network).

As derived in \cite{DiBenedettoIFAC11Stab} the dynamics of a MCN $N$ can be modeled as in Figure \ref{MCNblocks}, where each block is a discrete-time SISO LTI system with sampling time equal to the frame duration, characterized by the transfer functions $G_{\R}(z)$, $P(z)$ and $G_{\O}(z)$. Throughout the paper, given a transfer function $F(z)$, we will denote by $N_F$ and $D_F$ respectively its numerator and denominator. We derive the discrete-time transfer function representation of the plant input/output dynamics $P(z)$ by discretizing $\P(s)$ with sampling time $T$:
$$
P(z)=\frac1{M(z)}P'(z)=\frac{b_{n-1}z^{n-1}+b_{n-2}z^{n-2}+\ldots+b_0}{M(z)(z^r+a_{r-1}z^{r-1}+\ldots+a_0)},
$$
where $M(z)$ is the polynomial whose roots are the stable poles of $P(z)$. Since $P(z)$ is derived by discretizing a continuous-time transfer function, the degree $r+deg(M(z))$ of the denominator is equal to $n$. As illustrated in \cite{DiBenedettoIFAC11Stab}, the transfer function of the controllability graph $G_{\R}(z)$ can be expressed as follows:
\begin{equation}\label{eqGammaDef}
G_{\R}(z) =  \frac{\sum\limits_{d \in \DR}\gamma_{\R}(d)z^{\DDR-d}}{z^{\DDR}}, \qquad \gamma_{\R}(d) = \sum\limits_{\rho \in \chi_{\R}(d)} W_{\R}(\rho),
\end{equation}
where $\chi_{\R}(d)$ is the set of paths of $G_\R$ characterized by delay $d$, $\DR \subset \mathbb N$ is the set of the $|\DR|$ different delays introduced by the paths of $G_{\R}$, $\DDR = \max\{\DR\}$ is the maximum introduced delay, $W_{\R}(\rho)$ is the product of the weights of all links that generate path $\rho$ of $G_\R$. We define the vector $\boldsymbol\gamma_{\R} = [\gamma_{\R}(d), d \in \DR] \in \mathbb R^{|\DR|}$.
The transfer function of the observability graph $G_{\O}(z)$ is derived similarly.

According to the scheduling function, different paths can be characterized by different delays. In \cite{DiBenedettoIFAC11Stab} we illustrated that, in order to avoid issues related to zero-pole cancelations between the controllability and observability networks and the plant (which leads to loss of controllability and observability), it is convenient to design a scheduling function such that all paths are characterized by the same delay, i.e. $|\DR|=|\DO|=1$. Moreover, as illustrated in Section \ref{secCoDesignProblem}, this situation is also convenient since it reduces the computational complexity of the optimization problem addressed in this paper. However, the following example shows that designing such a scheduling can increase the frame duration and therefore the total time delay introduced by the network, and it is not necessarily the best solution in terms of system performance, as illustrated in Example \ref{exFinalExample}. When multiple control loops are closed on the same MCN it is even more difficult to design such a scheduling, as illustrated in \cite{D'InnocenzoCASE2009}.

\begin{figure}[ht]
\begin{center}
\vspace{-0.4cm}
\includegraphics[width=0.5\textwidth]{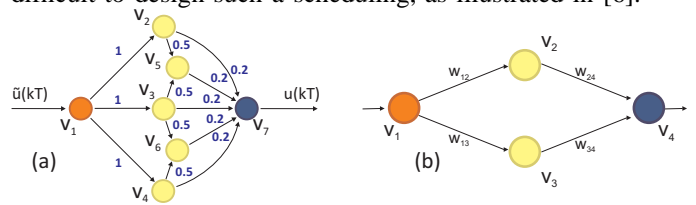}
\vspace{-0.3cm}
\caption{Controllability graphs of Example \ref{DelayExample} and \ref{exOvershoot} (a), and Example \ref{exDataRate} (b).}\label{figExamples}
\vspace{-0.5cm}
\end{center}
\end{figure}

\begin{example}\label{DelayExample}
Let us consider the controllability graph illustrated in Figure \ref{figExamples}(a) and assume that, because of interference in the shared wireless channel, simultaneous transmission is only allowed for the set of nodes $\{v_1, v_5, v_6\}$ and $\{v_2, v_3, v_4\}$. Let us suppose $\Delta=10\ ms$, and consider the following controllability scheduling functions $\eta_\R^a$ and $\eta_\R^b$:
\scriptsize
\begin{align*}
\eta_\R^a(1) = \{&(v_1,v_2),(v_1,v_3),(v_1,v_4)\},\\
\eta_\R^a(2) = \{&(v_2,v_5),(v_2,v_7),(v_3,v_5),(v_3,v_6),(v_3,v_7),(v_4,v_6),(v_4,v_7)\},\\
\eta_\R^a(3) = \{&(v_5,v_7),(v_6,v_7)\}.\\
\eta_\R^b(1) = \{&(v_1,v_2),(v_1,v_3),(v_1,v_4),(v_5,v_7),(v_6,v_7)\},\\
\eta_\R^b(2) = \{&(v_2,v_5),(v_2,v_7),(v_3,v_5),(v_3,v_6),(v_3,v_7),(v_4,v_6),(v_4,v_7)\}.
\end{align*}
\normalsize

Using scheduling $\eta_\R^a$, all paths are characterized by the same delay, namely $G_\R^a(z) = \frac{1}{z}$. Since $\Pi^a = 3$, the total delay introduced by the network is given by $\Pi^a \Delta = 30\ ms$. Using scheduling $\eta_\R^b$, paths are characterized by different delays, namely $G_\R^b(z) = \frac{0.6z+0.4}{z^2}$. Since $\Pi^b = 2$, the total delay introduced by the network is given by $\Pi^b \Delta = 20\ ms$. It is clear that there is a non-trivial tradeoff in the design of the scheduling function to obtain the minimum total network delay while obtaining simple transfer functions for the controllability graph, which motivates our modeling framework.
\end{example}


\section{Data rate model of communication links}\label{secMCNDataRateDef}

Consider a control scheme where actuation (resp. sensing) data generated by the controller (reps. by the sensor node that measures the output of the plant) is quantized and transmitted through the controllability (resp. observability) network. We assume that such quantization process is characterized by a quantization width $\delta_u$ (resp $\delta_y$) over a range of values bounded by $[-U_{max}, U_{max}]$ (resp. $[-Y_{max},Y_{max}]$). This is due to an implementation constraint, since the number of bits that we can use to represent the data is finite.

\begin{remark} In this paper, we only consider the effect of quantization as limited bandwidth constraints. It is out of the scope of this paper to characterize the effect of quantization on the dynamical properties of the closed loop system. For such results, the reader is referred to the existing literature on quantized control of linear systems, e.g. \cite{LiberzonTAC2007} and references therein.
\end{remark}

We assume that quantized data can be transmitted in the communication networks according to one of the following computational models implemented by the nodes. In the first model, each node computes the sum of all data received by the incoming links, and then transmits these data to the outgoing links multiplied by the corresponding link weight. In the second model, each node computes the weighted sum of all data received by the incoming links, and then transmits these data to the outgoing links. The advantage of the first model is that it allows each node to transmit with different data rates on the outgoing links, thus exploiting all available bandwidth on all links, while the second model restricts nodes to transmit at the same data rate on all outgoing links. On the other hand, as we will see in Section \ref{secCoDesignProblem}, the second model induces weaker conditions for solving efficiently the optimization problem of the closed loop system performance. It is worth to remark that the above computational models, even if they affect in a different way the data rate modeling, generate the same controllability and observability networks dynamics as in \eqref{eqGammaDef}.

\textbf{Computational model 1:} We first provide an example to show that the number of bits needed by a generic link $(v,v')$ to transmit data depends on the weight function $W_\R$.


\begin{example}\label{exDataRate}
Let us consider the controllability graph as in Figure \ref{figExamples}(b) and suppose that $\delta_u = 1$ and $U_{max} = 2$. Suppose that $W(v_1,v_3) = 1$: the link $(v_1,v_3)$ needs just 2 bits for representing the data. Suppose that $W(v_1,v_2) = 10^{-3}$: to avoid loss of precision when storing data in node $v_2$ we need to transmit on $(v_1,v_2)$ more bits with respect to $(v_1,v_3)$.
\end{example}

Given a controllability network $G_\R = ( V_{\R},E_{\R}, W_{\R} )$, we propose a model for relating the transmission data rate of each link with the weight function $W_{\R}$ and with the quantization parameters $\delta_u$, $U_{max}$, used to quantize the data at the input of the controllability network.
For each link $e=(v,v') \in E_\R$ we define a variable $\alpha_{e}$ as the sum of weights of all paths starting form $v_c$ and terminating with the link $e$:
\begin{align}\label{eqAlphaDef}
\alpha_{(v,v')}&=\sum_{d \in \DR}\alpha_{(v,v')}(d),\notag\\
\alpha_{(v,v')}(d)&=\sum_{
\scriptsize
\begin{array}[c]{c}\rho\in\chi_{\R}(d) \\ \rho=v_u,\ldots,v,v'\end{array}}
\normalsize
W_\R(\rho),
\end{align}
The data rate $R_e$ associated to data transmission on link $e$ is given by the number of bits needed to represent received data divided by the transmission time $\Delta$:
\begin{equation}\label{eqRvDef}
R_e=\Delta^{-1}\left\lceil log_2\left(\frac{2U_{max}}{\delta_u}\cdot\frac{|\alpha_e|}{\min\limits_{e \in E_\R} |\alpha_e|}\right)\right \rceil,
\end{equation}
where $\lceil \cdot \rceil$ is the ceiling function. Therefore, the data rate depends on the weight function $W_\R$ via equation \eqref{eqAlphaDef}. Note that also the variables $\boldsymbol\gamma_{\R}$ as defined in \eqref{eqGammaDef} are related to the variables $\alpha_e$ as defined in \eqref{eqAlphaDef}:
\begin{equation}\label{eqRelationGammaAlpha}
\gamma_{\R}(d)=\sum_{e\in inc(v_u)} \alpha_e(d),
\end{equation}
where $inc(v)=\{(\bar v, v) \in E_\R : \bar v \in V_\R\}$ is the set of incoming links in $v$. It is easy to check that the minimum achievable data rate is obtained for $\alpha_e = 1$, $\forall e \in E_\R$, which corresponds to the weight function defined by $W(v,v') = \frac{1}{|inc(v)|}$, $\forall (v,v') \in E_\R$. This choice is not necessarily the one that optimizes the performance of the closed loop system, as illustrated in Section \ref{secCoDesignProblem}.

\textbf{Computational model 2:} Using the same reasoning as above, we define for each $v \in V_\R \setminus v_u$ a variable $\alpha_v$ as follows:
\begin{align}\label{eqAlphaDef2}
\alpha_v=\sum_{d \in \DR}\alpha_v(d), \quad \alpha_v(d)=\sum_{
\scriptsize
\begin{array}[c]{c}\rho\in\chi_{\R}(d) \\ \rho=v_u,\ldots,v\end{array}
\normalsize
} W_\R(\rho),
\end{align}
with $\alpha_{v_c}=1$ since the controller node directly handles the quantized data at the input of the controllability network. Furthermore, since the actuator node is directly connected to the actuator and therefore does not transmit data in the network, we do not define $\alpha_{v_u}$. The transmission data rate of node $v$ and the variables $\gamma_{\R}(d), d \in \DR$ are given as follows:
\begin{align}
&R_v=\Delta^{-1}\left\lceil log_2\left(\frac{2U_{max}}{\delta_u}\cdot\frac{|\alpha_v|}{\min\limits_{\overline{v}\in V_{\R} \setminus v_u}{|\alpha_{\overline{v}}|}}\right)\right \rceil, \label{eqRvDef2}\\
&\gamma_{\R}(d)=\sum_{v \in pre(v_u)}W(v,v_u) \alpha_v(d), \label{eqRelationGammaAlpha2}
\end{align}
where $pre(v)=\{\overline{v}\,:\,(\overline{v},v)\in E_\R\}$ is the set of predecessor nodes of $v$. It is easy to check that the minimum achievable data rate is obtained for $\alpha_v = 1$, $\forall v \in V_\R \setminus v_u$, which corresponds to the weight function defined by $\sum\limits_{v' \in inc(v)}W(v',v)= 1$, $\forall v \in V_\R \setminus v_u$.

Depending on the use of the first or second computational model, we define the vector $\boldsymbol\alpha_\R$ respectively by $[\alpha_e, e \in E_\R] \in \mathbb R^{|E_\R|}$ and $[\alpha_v, v \in V_\R \setminus v_u] \in \mathbb R^{|V_\R|-1}$. The same reasoning can be used to characterize the data rates of the observability network.


\section{Co-design optimization problem} \label{secCoDesignProblem}

We consider in this section the problem of co-designing a digital controller $C(z)$ and the weight functions $W_\R$ and $W_\O$, in order to minimize the quadratic $\mathcal L_2$ norm of the error signal $e(k) = y(k) - r(k)$ while requiring the closed loop system to follow a step reference with zero steady state error in finite time (deadbeat control), with constraints on the maximum overshoot in the input and output signal of the plant and on the maximum data rate on the communication links.

\subsection{Controller and weight functions co-design for a given scheduling function} \label{subsecCoDesignGivenScheduling}

The following example motivates our investigation by illustrating that, for a given scheduling function, minimization of the quadratic $\mathcal L_2$ norm of the error signal, of the overshoot and of the data rate on the links are conflicting specifications.
\begin{example}\label{exOvershoot}
Let us consider a plant and a controllability graph as in Figure \ref{figExamples}(a), described respectively by the transfer functions $P(z)=\frac1{z+3}$ and $G_\R(z)=\frac{\gamma_\R(1)z+\gamma_\R(2)}{z^2}$. In this example we assume that communication nodes use the computational model 2 and, for simplicity, we will not consider the observability network. Consider a controller of the following form:
$$
C(z)=\frac{z^2(d_1z+d_0)}{(z-1)(z^2+c_1z+c_0)}.
$$
To design a deadbeat controller we impose $N_F+D_F=z^4$, where $F(z)=C(z)G_\R(z)P(z)$, thus obtaining the following overshoot of the plant output:
\begin{align*}
O_y&=
\max \left\{\begin{array}{c}
         \left|\frac{20\gamma_\R(1)\gamma_\R(2)+21\gamma_\R(1)^2}{3\gamma_\R(1)^2-\gamma_\R(2)^2+2\gamma_\R(1)\gamma_\R(2)}\right|, \\
         \left|\frac{3\gamma_\R(1)^2+20\gamma_\R(2)^2+20\gamma_\R(1)\gamma_\R(2)}{3\gamma_\R(1)^2-\gamma_\R(2)^2+2\gamma_\R(1)\gamma_\R(2)}\right|
       \end{array}\right\}
\end{align*}
Assume that $\delta_u=0.1$, $U_{max} = 500$, $\Delta=10ms$ and $\gamma_\R(1)=1$, then the overshoot is given by:
$$
O_y=\frac{20\gamma_\R(2)^2+20\gamma_\R(2)+3}{-\gamma_\R(2)^2+2\gamma_\R(2)+3}.
$$
When a cancelation occurs with an unstable pole of the plant (i.e. $\frac{\gamma_\R(2)}{\gamma_\R(1)}=3$), then the overshoot tends to infinity. We discuss now two co-design cases:

\noindent\textbf{Case 1:} Suppose to design $W_\R$, so that $\gamma_\R(1)=450$ and $\gamma_\R(2)=0.003$. Designing the controller variables $c_0,\ c_1,\ d_0,\ d_1$ to obtain deadbeat control, we obtain:
$$
\parallel e(k) \parallel_{\mathcal L_2}=6.245, \quad O_y=7, \quad R_{v_2} = 3.1kHz.
$$
\noindent\textbf{Case 2:} Suppose to design $W_\R$, so that $\gamma_\R(1)=1$ and $\gamma_\R(2)=1$. Designing the controller variables $c_0,\ c_1,\ d_0,\ d_1$ to obtain deadbeat control, we obtain:
$$
\parallel e(k) \parallel_{\mathcal L_2}=13.5508, \quad \quad O_y = 502.5, \quad R_{v_2} = 1.4kHz.
$$
The co-design of case 2, with respect to case 1, improves the transmission data rate of node $v_2$ at the cost of strongly increasing the overshoot and the $\mathcal L_2$ norm.
\end{example}

The above example shows that the co-design of controller and weight functions is non-trivial, and therefore motivates addressing the following optimization problems, for the computational models described in Section \ref{secMCNDataRateDef}.
%
%

\begin{problem}\label{probMCNOptimizationProblem}
Given a MCN $\mathcal{N}$, let $\P$, $(V_{\R},E_{\R})$, $\eta_{\R}$, $(V_{\O},E_{\O})$, $\eta_{\O}$ and $\Delta$ be given. Design a controller $C(z)$ and controllability and observability weight functions $W_\R$ and $W_\O$ to minimize the quadratic $\mathcal L_2$ norm of the error signal of the closed loop system, subject to the following constraints:
\begin{enumerate}
\item the closed loop system is asymptotically stable, with zero steady state error in finite time with respect to a step reference (deadbeat control);

\item the overshoots of the signals $u(k)$ and $y(k)$ are norm bounded;

\item the transmission data rate of each link (resp. of each node) is upper bounded.
\end{enumerate}
\end{problem}

Since we consider a step reference input, we design a digital controller $C(z)$ in the following form:
\begin{align}
C(z)&=M(z) z^{\DDR} z^{\DDO} C'(z),\notag\\
C'(z)&=\frac{d_sz^s+d_{s-1}z^{s-1}+\ldots+d_0}{(z-1) (z^m+c_{m-1}z^{m-1}+\ldots+c_0)}
\end{align}
where $M(z)$, $z^{\DDR}$ and $z^{\DDR}$ are used to delete the stable poles of the plant and of the controllability and observability networks. In order to obtain the smallest response time of the closed loop system, we design $C(z)$ with same degree in the numerator and denominator polynomials, therefore $m + 1 = s + deg(M(z)) + \DDR + \DDO$. Note that the closed loop system is forced to follow a step reference with zero steady state error in finite time if and only if
\begin{equation}\label{eqDeadbeatControl}
D_C D_{P'} + N_{C'} N_{G_\R} N_P N_{G_\O}=z^{l},
\end{equation}
where $l = m + r + 1 - \DDO$ is the response time. Moreover, we consider the weight functions $W_\R$ and $W_\O$ as additional variables of our optimization problem. In other words, we address co-design of the controller $C(z)$ and of the dynamics $G_\R(z),G_\O(z)$ due to the data flow through the networks. The variables of our optimization problem are defined by the following real vectors:
\begin{equation*}
\begin{array}{ll}
  \mathbf{c}=[c_{m-1}, c_{m-2},\cdots, c_0]; & \mathbf{d}=[d_{s}, d_{s-1},\cdots, d_0]; \\
  \mathbf{w_\R}=[W_{\R}(e), \forall e \in E_\R]; & \mathbf{w_\O}=[W_{\O}(e), \forall e \in E_\O].
\end{array}
\end{equation*}

\textbf{Computational model 1:} Problem \ref{probMCNOptimizationProblem} can be formalized as follows:
$$
\min\limits_{\bold{c}, \bold{d}, \bold{w_\R}, \bold{w_\O}}{||e(k)||^2_{\mathcal L_2}}
$$
subject to:
\begin{enumerate}
\item $D_C D_{P'} + N_{C'} N_{G_\R} N_P N_{G_\O}=z^{l}$;\\

\item $O_{u} \leq \overline{O}_{u}$, $O_{y} \leq \overline{O}_{y}$;\\

\item $R_e \leq \bar{R}_e$, $\forall e \in E_\R \cup E_\O$.
\end{enumerate}
The following propositions characterize the algebraic relation between the objective function and constraints in Problem \ref{probMCNOptimizationProblem}, and the optimization variables $\mathbf{c}$, $\mathbf{d}$, $\mathbf{w_\R}$, and $\mathbf{w_\O}$.
\begin{proposition}\label{propHardL2Norm}
The quadratic $\mathcal{L}_2$ norm of the error signal of the closed loop system is a polynomial function of $\bold{d}$ and $\bold{w_\R}$.
\end{proposition}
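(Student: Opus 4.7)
The plan is to use the deadbeat constraint to collapse the reference-to-output transfer function into a finite impulse response, after which the error signal becomes a finite-length sequence whose samples are visibly polynomial in $\bold{d}$ and $\bold{w_\R}$.

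First, I would substitute the structural form $C(z) = M(z)\,z^{\DDR} z^{\DDO} C'(z)$ into the closed-loop transfer function $T(z) = CG_\R P/(1 + CG_\R P G_\O)$. The factors $M(z)$, $z^{\DDR}$ and $z^{\DDO}$ cancel against the stable plant poles and the network-delay monomials in $P$, $G_\R$ and $G_\O$, reducing $T$ to
$$T(z) = \frac{z^{\DDO} N_{C'}\, N_{G_\R}\, N_P}{D_C D_{P'} + N_{C'} N_{G_\R} N_P N_{G_\O}}.$$
The deadbeat condition (1) collapses the denominator to $z^{l}$, giving
$$T(z) = \frac{z^{\DDO} N_{C'}(z)\, N_{G_\R}(z)\, N_P(z)}{z^{l}},$$
which is an FIR transfer function. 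After this step, all dependence on $\bold{c}$ and $\bold{w_\O}$ has been absorbed into the deadbeat identity and no longer appears explicitly in $T(z)$: its coefficients depend polynomially on $\bold{d}$ through $N_{C'}$ and on $\bold{w_\R}$ through $N_{G_\R}$, since by \eqref{eqGammaDef} each $\gamma_{\R}(d)$ is a polynomial in the edge weights.

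Next, for the step reference $R(z)=z/(z-1)$, I would form
$$E(z) = (T(z)-1)\,R(z) = \frac{z\bigl(z^{\DDO} N_{C'} N_{G_\R} N_P - z^{l}\bigr)}{z^{l}(z-1)}.$$
Zero steady state error forces $T(1)=1$, and combined with the deadbeat identity evaluated at $z=1$ (where $D_C(1)=0$ forces $N_{C'}(1)\,N_{G_\R}(1)\,N_P(1)\,N_{G_\O}(1)=1$) this pins the normalization $N_{G_\O}(1)=1$. Under this normalization the numerator of $E(z)$ vanishes at $z=1$, so synthetic division by $(z-1)$ produces a polynomial in $z$ of degree at most $l-1$ divided by $z^{l}$, whose coefficients remain polynomial in $(\bold{d},\bold{w_\R})$.

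Finally, inverse $\mathcal Z$-transforming yields a finite sequence $e(0),\ldots,e(l-1)$ with $e(k)\equiv 0$ for $k\ge l$, each sample a polynomial in $(\bold{d},\bold{w_\R})$, hence
$$\|e(k)\|^2_{\mathcal L_2} = \sum_{k=0}^{l-1} e(k)^2$$
is itself a polynomial in these variables. The main delicate point I expect is the $(z-1)$ cancellation and the attendant degree bookkeeping: each $e(k)$ is linear in $\bold{d}$ but can be of rather high degree in $\bold{w_\R}$ because $\gamma_{\R}(d)$ is a sum of products of up to $d$ edge weights, so $\|e\|^2_{\mathcal L_2}$ is quadratic in $\bold{d}$ but of substantially higher degree in $\bold{w_\R}$. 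The structural reason the objective ends up polynomial (rather than rational) is the FIR collapse enabled by the deadbeat identity.
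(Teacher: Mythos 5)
Your proof is correct and follows essentially the same route as the paper: invoke the deadbeat identity to collapse the closed-loop denominator to $z^{l}$, obtain an FIR reference-to-output map whose coefficients $q_h$ are polynomial in $\bold{d}$ (via $N_{C'}$) and in $\bold{w_\R}$ (via $\gamma_\R$), and conclude that the error is a finite sequence whose sum of squares is polynomial. The only difference is cosmetic --- you perform the $(z-1)$ cancellation explicitly in the $\mathcal Z$-domain and make explicit the DC-gain normalization $N_{G_\O}(1)=1$ needed for the error to terminate, whereas the paper works in the time domain with delayed unit steps and partial sums of the $Aq_h - A$ and leaves that normalization implicit.
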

\begin{proof}
The Z transform of $y(k)$ in the closed loop system can be written as
\begin{align}\label{eqZPlantOutput}
Y(z)&=\frac{C(z) G_\R(z) P(z)}{1+C(z) G_\R(z) P(z) G_\O(z)} R(z)\notag\\
&=\frac{z^{\DDO} N_{C'} N_{G_\R} N_P}{D_C D_{P'} + N_{C'} N_{G_\R} N_P N_{G_\O}}R(z)\notag\\
&=\frac{q_{l-\nu}z^{l-\nu}+\ldots+q_1z+q_0}{z^{l}} R(z)
\end{align}
where $\nu = \min\{\DR\} + 1 \geq 2$ is the difference between the number of poles and the number of zeros of the closed loop transfer function, and $R(z)=A\frac z{z-1}$, $A\in\mathbb{R}$ is a step reference input. It follows that
\begin{equation*}
Y(z)=\left(\frac{q_{l-\nu}}{z^\nu} + \ldots+\frac{q_1}{z^{l-1}}+\frac{q_0}{z^l}\right)\frac{Az}{z-1}
\end{equation*}
and
\begin{align}\label{eqPlantOutput}
y(k) = Aq_{l-\nu}\delta_{-1}(k-\nu)+\ldots+Aq_0\delta_{-1}(k-l),
\end{align}
where $\delta_{-1}(k)$ is the unitary step. Therefore
\begin{align*}
e(k)= Aq_{l-\nu}\delta_{-1}(k-\nu)+ \ldots + Aq_0\delta_{-1}(k-l) - A\delta_{-1}(k).
\end{align*}
Thus,
\begin{align}\label{eqL2normFunction}
\big|\big| e(k)\big|\big|^2_{\mathcal L_2} =& \Big|\Big| \big(\underbrace{-A, \ldots, -A}_{\nu \text{ times}}, Aq_{l-\nu} - A, Aq_{l-\nu} +\notag\\+& Aq_{l-\nu-1} - A, \ldots, \sum\limits_{h=0}^{l-\nu} \big(Aq_h - A\big) \Big|\Big|^2_{\mathcal L_2}.
\end{align}
As shown in \eqref{eqZPlantOutput}, the parameters $q_h$ are sums of products of the coefficients of $N_{C'}$, $N_{G_\R}$, $N_P$, $D_{G_\O}$ as follows:
\begin{equation}\label{eqParametersOutput}
q_h=\sum_{i + j + (\DDR-\kappa) = h} b_i \cdot d_j \cdot \gamma_\R(\kappa),
\end{equation}
where $\gamma_\R(\kappa), \kappa \in \DR$ are polynomial functions of $\bold{w_\R}$ as in \eqref{eqGammaDef}. This concludes the proof.

\end{proof}

\begin{remark}
Note that, as shown in \eqref{eqL2normFunction}, the quadratic $\mathcal{L}_2$ norm of the error signal increases as $\nu = \min\{\DR\} + 1$ increases. This implies that the \emph{minimum} delay $\min\{\DR\}$ is the one that mainly affects the responsiveness of the system to variations of the reference signal, and is thus more significant than the other delays introduced by the network.
\end{remark}

\begin{proposition}\label{propHardDeadbeat}
Constraint 1 of Problem \ref{probMCNOptimizationProblem} is a polynomial constraint with respect to $\bold{c}$, $\bold{d}$, $\bold{w_\R}$ and $\bold{w_\O}$.
\end{proposition}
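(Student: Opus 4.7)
The plan is to match coefficients in the polynomial identity \eqref{eqDeadbeatControl} and argue that each resulting scalar equation is polynomial in the design variables. First I would inspect each factor appearing in the identity and identify which variables it depends on. The factor $D_C=(z-1)(z^m+c_{m-1}z^{m-1}+\cdots+c_0)$ has, as coefficients of the powers of $z$, affine functions of $\mathbf{c}$; $D_{P'}$ has constant coefficients (independent of the design variables); $N_{C'}=d_s z^s+\cdots+d_0$ has coefficients that are linear in $\mathbf{d}$; $N_P$ has constant coefficients; and by \eqref{eqGammaDef} the coefficients of $N_{G_\R}$ are the numbers $\gamma_\R(d)=\sum_{\rho\in\chi_\R(d)} W_\R(\rho)$, where each $W_\R(\rho)$ is the product of the weights of the links of a fixed path $\rho$, hence a monomial in $\mathbf{w_\R}$. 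Consequently the coefficients of $N_{G_\R}$ are polynomial in $\mathbf{w_\R}$, and analogously for $N_{G_\O}$ in $\mathbf{w_\O}$.

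Next I would invoke the standard fact that the coefficients of a product of univariate polynomials are bilinear (or multilinear) combinations of the coefficients of the factors: if $A(z)B(z)=\sum_h(\sum_{i+j=h}a_ib_j)z^h$, then each coefficient of $AB$ is a polynomial (indeed a bilinear form) in the coefficients of $A$ and $B$. Applying this successively, the coefficients of $D_C D_{P'}$ are affine in $\mathbf{c}$, while the coefficients of $N_{C'}N_{G_\R}N_P N_{G_\O}$ are polynomial in $(\mathbf{d},\mathbf{w_\R},\mathbf{w_\O})$, being finite sums of products of one linear factor in $\mathbf{d}$, one polynomial factor in $\mathbf{w_\R}$, a constant factor, and one polynomial factor in $\mathbf{w_\O}$.

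To conclude, the equation $D_C D_{P'}+N_{C'}N_{G_\R}N_P N_{G_\O}=z^{l}$ is equivalent to equating, for each $h\in\{0,\ldots,l\}$, the coefficient of $z^h$ on the left-hand side to $1$ if $h=l$ and to $0$ otherwise. Each such scalar equation has the form
\begin{equation*}
p_h(\mathbf{c},\mathbf{d},\mathbf{w_\R},\mathbf{w_\O})=\delta_{h,l},
\end{equation*}
where $p_h$ is a polynomial obtained by the coefficient bookkeeping described above. Hence constraint 1 of Problem \ref{probMCNOptimizationProblem} is a finite system of polynomial equalities in the design variables, which is the claim.

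The main obstacle is not conceptual but notational: one must be careful that the expressions $\gamma_\R(d)$ (and their observability counterparts) are genuinely polynomials in $\mathbf{w_\R}$ (respectively $\mathbf{w_\O}$), which follows at once from \eqref{eqGammaDef} because a path weight is a product of edge weights and the number of paths of each delay is finite. Once this is in hand, the rest of the proof is a direct application of the product rule for polynomial coefficients.
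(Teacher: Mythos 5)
Your proof is correct and follows essentially the same route as the paper: expand the left-hand side of \eqref{eqDeadbeatControl}, observe that each coefficient is a sum of products of the $b_i$, $d_j$, $\gamma_\R(\kappa)$, $\gamma_\O(\iota)$ (plus the affine-in-$\mathbf{c}$ contribution from $D_C D_{P'}$), and note that the $\gamma$'s are polynomial in the weights by \eqref{eqGammaDef}. The paper simply writes the resulting coefficient formula $p_h$ explicitly and stops there, whereas you spell out the intermediate bookkeeping; the content is the same.
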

\begin{proof}
Equation \eqref{eqDeadbeatControl} can be rewritten as follows:
$$
z^{l} + (p_{l-1} -1) z^{l-1} + \ldots + p_{0} = z^{l},
$$
where the parameters $p_h$ are given by
\begin{align*}
p_h=\sum_{i + j + (\DDR - \kappa) + (\DDO - \iota) = h} b_i d_j \gamma_\R(\kappa) \gamma_\O(\iota)+\sum_{i+j=h}a_i + c_j,
\end{align*}
and where $\gamma_\R(\kappa), \kappa \in \DR$ and $\gamma_\O(\iota), \iota \in \DO$ are polynomial functions of $\bold{w_\R}$ and $\bold{w_\O}$ as in \eqref{eqGammaDef}. This completes the proof.
\end{proof}


\begin{proposition}\label{propHardOutputOvershoot}
Constraint 2 of Problem \ref{probMCNOptimizationProblem} is a polynomial constraint with respect to $\bold{d}$ and $\bold{w_\R}$.
\end{proposition}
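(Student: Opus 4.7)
The plan is to follow the pattern of the proof of Proposition \ref{propHardL2Norm}, extending the analysis to both signals $y(k)$ and $u(k)$. Since the deadbeat constraint \eqref{eqDeadbeatControl} forces $1 + C(z) G_\R(z) P(z) G_\O(z) = z^{l}/(D_C D_{P'})$, and since $C(z)G_\R(z) = M(z)\,z^{\DDO}\,N_{C'}\,N_{G_\R}/D_C$, the factor $D_C$ cancels in the reference-to-$u$ transfer function, yielding
\begin{equation*}
U(z) \;=\; \frac{M(z)\,z^{\DDO}\,N_{C'}(z)\,N_{G_\R}(z)\,D_{P'}(z)}{z^{l}}\,R(z).
\end{equation*}
Taking the inverse $\mathcal Z$-transform with a step reference, $u(k)$ becomes, exactly as $y(k)$ in \eqref{eqPlantOutput}, a finite linear combination of delayed unit steps whose coefficients are sums of products of the fixed plant coefficients $b_i$, the entries of $\bold{d}$, and the quantities $\gamma_\R(\kappa)$. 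Because \eqref{eqGammaDef} exhibits each $\gamma_\R(\kappa)$ as a polynomial in $\bold{w_\R}$, both $y(k)$ and $u(k)$, viewed as functions of the design variables at any fixed time index $k$, are polynomials in $\bold{d}$ and $\bold{w_\R}$ alone (the vectors $\bold{c}$ and $\bold{w_\O}$ no longer appear).

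The second step exploits the finite settling time guaranteed by deadbeat control: for every $k\ge l$ one has $y(k)=A$ and $u(k)$ equal to its steady-state value $A/P(1)$. Therefore, independently of whether the overshoot is defined as a one-sided peak or as a two-sided peak deviation (an $\ell_\infty$-type norm), it reduces to a maximum taken over the finite set of time indices $\{0,1,\ldots,l\}$. Consequently each of $O_y\le\bar O_y$ and $O_u\le\bar O_u$ translates into a finite conjunction of inequalities of the form $y(k)-A\le\bar O_y$ (plus its symmetric version, if a two-sided bound is adopted) and similarly for $u$; by the previous step each such inequality is polynomial in $\bold{d}$ and $\bold{w_\R}$, proving that Constraint~2 is a polynomial constraint in those variables.

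The main obstacle is the derivation of $U(z)$: a priori both $\bold{c}$, through $D_C$, and $\bold{w_\O}$, through $G_\O$, appear in the reference-to-$u$ response, and one must be careful to invoke the deadbeat identity \eqref{eqDeadbeatControl} so that $D_C$ cancels and $G_\O$ drops out cleanly. Once this cancellation is made explicit, the rest of the argument is a direct transcription of the analysis of $y$ carried out in Proposition \ref{propHardL2Norm}, combined with the elementary observation that $\max_{k} p_k(x)\le \tau$, for a finite family of polynomials $p_k$, is equivalent to the finite system of polynomial constraints $p_k(x)\le \tau$.
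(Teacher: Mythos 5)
Your proposal is correct and follows essentially the same route as the paper: express the step response through the coefficients $q_h$ of \eqref{eqParametersOutput}, which are polynomial in $\bold{d}$ and $\bold{w_\R}$, and then unfold the finite-horizon sup-norm bound into finitely many polynomial inequalities, exactly as in \eqref{eqOvershootOutput}. Your explicit derivation of $U(z)$ via the deadbeat identity \eqref{eqDeadbeatControl}, showing that $D_C$ cancels and $G_\O$ drops out so that $u(k)$ depends only on $\bold{d}$ and $\bold{w_\R}$, is a correct and welcome elaboration of the step the paper dismisses with ``using the same reasoning.''
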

\begin{proof}
By \eqref{eqPlantOutput} the overshoot $O_y$ can be written as follows:
\begin{align}\label{eqOvershootOutput}
O_y =& \Big|\Big| \Big( \underbrace{-A, \ldots, -A}_{\nu \text{ times}}, Aq_{l-\nu} - A, Aq_{l-\nu} + Aq_{l-\nu-1} - A,\notag\\
&\ldots, \sum\limits_{h=1}^{l-\nu} \Big(Aq_h - A\ \Big) \Big|\Big|_{\infty}.
\end{align}
Since parameters $q_h$ are as in \eqref{eqParametersOutput}, then the constraint $O_y \leq \overline{O}_y$ can be rewritten as $l-\nu+1$ constraints $|-A| \leq O_y, |Aq_{l-\nu} - A| \leq O_y,\ldots, \left|\sum\limits_{h=1}^{l-\nu} (Aq_h - A\ )\right| \leq O_y$, which are polynomial with respect to $\bold{d}$ and $\bold{w_\R}$.
Using the same reasoning, it is easy to prove that the same holds for the constraint on $O_{u}$.
\end{proof}

\begin{proposition}\label{propHardRate}
Constraint 3 of Problem \ref{probMCNOptimizationProblem} is a polynomial constraint with respect to $\bold{w_\R}$ and $\bold{w_\O}$.
\end{proposition}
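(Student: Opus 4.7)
The plan is to start from the definition \eqref{eqRvDef} of $R_e$ and strip away, one by one, the non-polynomial operations (ceiling, logarithm, division, minimum, absolute value) that appear in the constraint $R_e\leq\bar R_e$, so that what remains is a polynomial inequality in $\mathbf{w_\R}$. The backbone of the argument is the observation already embedded in \eqref{eqAlphaDef}: each $\alpha_e$ is a sum over paths of products of link weights, hence a polynomial in the entries of $\mathbf{w_\R}$, and therefore so is $\alpha_e^2$.

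First I would rewrite $R_e\leq \bar R_e$ as $\lceil\log_2(\cdot)\rceil\leq\Delta\bar R_e$ and use the standard fact that, for any integer $n$, $\lceil x\rceil\leq n$ iff $x\leq n$. Letting $N_e=\lfloor\Delta\bar R_e\rfloor$, the constraint is equivalent to
\begin{equation*}
\frac{2U_{\max}}{\delta_u}\cdot\frac{|\alpha_e|}{\min_{e'\in E_\R}|\alpha_{e'}|}\leq 2^{N_e},
\end{equation*}
that is, $|\alpha_e|\leq K_e\,\min_{e'\in E_\R}|\alpha_{e'}|$ where $K_e=2^{N_e}\delta_u/(2U_{\max})$ is a constant. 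Next I would eliminate the minimum in the standard way, replacing a single constraint with a family of constraints: $|\alpha_e|\leq K_e|\alpha_{e'}|$ for every $e'\in E_\R$. Finally, the absolute values are removed by squaring: $\alpha_e^{2}\leq K_e^{2}\,\alpha_{e'}^{2}$. Since $\alpha_e$ is a polynomial function of $\mathbf{w_\R}$ by \eqref{eqAlphaDef}, each of these squared inequalities is a polynomial inequality in $\mathbf{w_\R}$, so Constraint 3 on the controllability network reduces to a finite collection of polynomial inequalities in $\mathbf{w_\R}$.

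An identical argument, using the analogous variables $\alpha_e$ (or $\alpha_v$, if computational model 2 is adopted, via \eqref{eqAlphaDef2} and \eqref{eqRvDef2}) defined on the observability network, yields polynomial inequalities in $\mathbf{w_\O}$. Taking the union of the two families of constraints gives a finite set of polynomial inequalities in $(\mathbf{w_\R},\mathbf{w_\O})$, which is the desired conclusion.

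The only delicate point, and what I would expect to spend a sentence justifying carefully, is the equivalence $\lceil x\rceil\leq n\Leftrightarrow x\leq n$ for integer $n$, which requires that the right-hand side be taken as $N_e=\lfloor\Delta\bar R_e\rfloor$ rather than $\Delta\bar R_e$ itself; otherwise the rewriting is lossy. Everything else is a routine transformation, and no step enlarges the variable set beyond $\mathbf{w_\R}$ and $\mathbf{w_\O}$, so the proposition follows.
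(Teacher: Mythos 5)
Your proof is correct and follows essentially the same route as the paper's: peel off the ceiling and logarithm to get $|\alpha_e|\leq K_e\min_{e'}|\alpha_{e'}|$, replace the minimum by one inequality per link, and invoke the fact that each $\alpha_e$ is a polynomial in $\mathbf{w_\R}$ via \eqref{eqAlphaDef}. Your two refinements --- taking $N_e=\lfloor\Delta\bar R_e\rfloor$ when discharging the ceiling, and squaring to eliminate the absolute values rather than leaving $|\alpha_e|-\kappa_e|\alpha_{\bar e}|\leq 0$ as the paper does --- actually make the argument slightly tighter than the published one.
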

\begin{proof}
The constraint $R_e \leq \bar R_e$, $e \in E_\R$ can be written as:
\begin{align}\label{eqAlphaConstraints}
&R_e=\Delta^{-1}\left\lceil log_2\left(\frac{2U_{max}}{\delta_u}\cdot\frac{|\alpha_e|}{\min\limits_{\overline{e}\in E_{\R}}{|\alpha_{\overline{e}}|}}\right)\right \rceil \leq \bar R_e \notag\\
&\Leftrightarrow\, \frac{(2U_{max})|\alpha_e|}{\delta_u \min\limits_{\bar{e} \in E_\R}{|\alpha_{\bar{e}}|}} \leq  2^{\bar R_e \Delta} \Leftrightarrow\, |\alpha_e| \leq \frac{\delta_u 2^{\bar R_e \Delta}}{2U_{max}}\min_{\bar{e} \in E_\R}|\alpha_{\bar{e}}| \notag\\
&\Leftrightarrow\, |\alpha_e|-\kappa_e\min_{\bar{e} \in E_\R}|\alpha_{\bar{e}}| \leq 0 \Leftrightarrow\, |\alpha_e|-\kappa_e |\alpha_{\bar{e}}| \leq 0,\ \forall \bar{e} \in E_\R,
\end{align}
where $\kappa_e \geq 0$, and by \eqref{eqAlphaDef} the variables $\alpha_e, e \in E_\R$ are polynomial functions of $\bold{w_\R}$. The proof for $e \in E_\O$ is similar.
\end{proof}


Propositions \ref{propHardL2Norm}-\ref{propHardRate} show that Problem \ref{probMCNOptimizationProblem} is a polynomial optimization problem with respect to the variables $\bold{c}$, $\bold{d}$, $\bold{w_\R}$ and $\bold{w_\O}$, which is well known to be NP-hard (see \cite{ParriloCDC2009} and references therein). Therefore, unless $P=NP$, there exist no polynomial-time algorithm to solve any instance of Problem \ref{probMCNOptimizationProblem}. In the following we provide conditions such that Problem \ref{probMCNOptimizationProblem} is a convex optimization problem.
\begin{lemma}\label{lemAlphaGammaWdataRateOnLinks}
Using the computational model 1, the following statements hold:
\begin{description}
\item[(i)] For any given vector $\boldsymbol\alpha_\R$, there always exists a unique vector $\bold{w_\R}$ that satisfies equation \eqref{eqAlphaDef};

\item[(ii)] Each component of $\boldsymbol\gamma_\R$ is a linear combination of the components of $\boldsymbol\alpha_\R$ if and only if
\begin{align}\label{eqDelayDisjoint}
&\forall e_1,e_2 \in inc(v), v \in V_\R \setminus v_u, \exists ! d \in \DR :\notag\\
&\forall \rho \in \bigcup\limits_{\bar d \in \DR \setminus d} \chi_\R(\bar d), e_1 \notin \rho, e_2 \notin \rho,
\end{align}
namely if and only if links belonging to paths characterized by different delays never merge in the same node.
\end{description}
\end{lemma}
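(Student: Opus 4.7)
The plan is to handle the two parts separately, exploiting that $G_\R$ is a DAG rooted at $v_c$.

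For statement (i), I would proceed by topological induction on $V_\R$. Setting $\beta_v := \sum_{e \in inc(v)} \alpha_e$ for $v \neq v_c$ and $\beta_{v_c} := 1$, unpacking \eqref{eqAlphaDef} gives the recursion $\alpha_{(v,v')} = W_\R(v,v')\,\beta_v$, since every partial path from $v_c$ ending with the link $(v,v')$ is the concatenation of a partial path from $v_c$ to $v$ with that last edge. Processing the vertices in topological order, this recursion uniquely determines $W_\R(v,v') = \alpha_{(v,v')}/\beta_v$ at each step, which proves both existence and uniqueness of $\mathbf{w_\R}$.

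For the forward direction of statement (ii), I would use the condition to prove by induction along the topological order that for every $v \in V_\R \setminus v_u$ all partial paths from $v_c$ to $v$ carry the same delay. At the inductive step, the condition forces any two incoming edges at $v$ to be used only by paths of a single common delay, which pins down the partial delay to $v$. It follows that each $\alpha_e$ is supported on a single delay $d_e$, i.e., $\alpha_e(d) = \alpha_e \cdot \mathbf{1}_{d=d_e}$. Substituting into \eqref{eqRelationGammaAlpha} immediately yields $\gamma_\R(d) = \sum_{e \in inc(v_u),\, d_e = d} \alpha_e$, a linear combination with coefficients in $\{0,1\}$.

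For the backward direction, I would regard each $\gamma_\R(d)$ and each $\alpha_e$ as a polynomial in the independent variables $\{W_\R(e) : e \in E_\R\}$. Because $G_\R$ is acyclic, distinct paths yield distinct monomials, so a hypothesized identity $\gamma_\R(d) = \sum_e c_e \alpha_e$ can be matched monomial-by-monomial. The monomials of $\alpha_e$ for $e \notin inc(v_u)$ correspond to strictly partial paths and have no counterpart on the LHS, forcing $c_e = 0$ for those edges. For $e \in inc(v_u)$, matching the coefficient $\mathbf{1}_{d(\rho)=d}$ of each full path $\rho$ terminating at $e$ against the uniform RHS coefficient $c_e$ forces all such $\rho$ to carry a common delay. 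I would then propagate this ``single delay per terminal edge'' property back upstream through the DAG to recover the stated condition at every internal node.

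The step I expect to be most delicate is this last propagation. Turning the local property ``each edge incident to $v_u$ supports only one delay'' into the global condition at every $v \in V_\R \setminus v_u$ is not purely syntactic: one must rule out configurations in which partial-delay discrepancies at some inner merging vertex conspire with the downstream continuation structure to leave every terminal edge single-delay. Handling this carefully, presumably by picking a topologically minimal $v$ violating the condition and tracing the resulting delay mismatch to an explicit polynomial obstruction downstream, is where the argument will demand the most care.
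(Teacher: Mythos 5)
Your part (i) and the sufficiency half of (ii) are essentially the paper's own arguments: the recursion $\alpha_{(v,v')} = W_\R(v,v')\sum_{\bar v \in inc(v)}\alpha_{(\bar v, v)}$ resolved in topological order for (i), and the observation that under \eqref{eqDelayDisjoint} each terminal edge $e\in inc(v_u)$ satisfies $\alpha_e=\alpha_e(d_e)$ for a single delay $d_e$, so that \eqref{eqRelationGammaAlpha} collapses to $\gamma_\R(d)=\sum_{e\,:\,d_e=d}\alpha_e$. (Both you and the paper tacitly divide by $\sum_{\bar v \in inc(v)}\alpha_{(\bar v,v)}$ in (i) and so neither treats the degenerate case where this sum vanishes; I do not count that against you.)

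The necessity half of (ii) is where you diverge from the paper and where the genuine gap sits. Your monomial-matching argument in the independent variables $\{W_\R(e)\}$ is clean as far as it goes: it correctly shows that an identity $\gamma_\R(d)=\sum_e c_e\alpha_e$ forces $c_e=0$ for $e\notin inc(v_u)$ and forces every terminal edge to carry a single delay. But the step you defer --- propagating ``single delay per terminal edge'' upstream to recover \eqref{eqDelayDisjoint} at every internal node --- is not routine bookkeeping, and you do not supply it. Indeed, an edge $e\in inc(v)$ at an internal node $v$ can lie on full paths of two different delays merely because two different continuations from $v$ to $v_u$ contribute different downstream delays, while every terminal edge stays single-delay; so the implication cannot be purely syntactic, and what it actually needs is precisely the ingredient you are missing: that two prefixes of different partial delay merging at $v$ can be completed by a \emph{common} continuation, producing two full paths of different total delay through the \emph{same} terminal edge. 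The paper avoids the propagation altogether: starting from a violation of \eqref{eqDelayDisjoint} at a merge node, it picks a downstream link $(v,v_u)$ shared by the two offending paths and exhibits the explicit nonlinear dependence $\gamma_\R(d_1)=\alpha_{v_1}\alpha_{(v,v_u)}/(\alpha_{v_1}+\alpha_{v_2})$, i.e., it manufactures the obstruction locally rather than inferring the global condition. To close your argument you must either add that shared-continuation/delay-additivity step, or be content with the terminal-edge condition --- which is what your monomial matching actually characterizes, and which is weaker than \eqref{eqDelayDisjoint} as literally quantified.
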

\begin{proof}
\emph{(i):} By equation \eqref{eqAlphaDef} it follows that:
\begin{align}\label{eqFromWtoAlpha}
W_\R(v,v') \cdot {\sum\limits_{\bar v \in inc(v)} \alpha_{(\bar v,v)}} = {\alpha_{(v,v')}}.
\end{align}
Since the graph is acyclic, for any given $\alpha_{(v,v')}$ we can assign the variable $W_\R(v,v')$ so that equation \eqref{eqAlphaDef} is satisfied, without affecting the value of the variables $\alpha_{(\bar v,v)}$, $\bar v \in inc(v)$, which can thus be considered as constants in equation \eqref{eqFromWtoAlpha}. By applying this reasoning for all $(v,v') \in E_\R$ the result follows.

\emph{Sufficiency of (i):} If \eqref{eqDelayDisjoint} holds then each link $(v,v_u) \in E_\R$ will belong to paths characterized by the same delay $d_v$, namely $\alpha_{(v,v_u)} = \alpha_{(v,v_u)}(d_v)$. This implies that $\gamma(d) = \sum\limits_{v : d_v = d} \alpha_{(v,v_u)}$ for all $d \in \DR$.

\emph{Necessity of (ii):} If condition \eqref{eqDelayDisjoint} is false, then there exist at least two links $(v_1,v')$ and $(v_2,v')$ that respectively belong to paths $\rho_1 \in \chi_\R(d_1)$ and $\rho_2 \in \chi_\R(d_2)$, $d_1, d_2 \in \DR$. Then there exists at least one link $(v,v_u) \in E_\R$ that belongs both to $\rho_1$ and $\rho_2$. Let $W_\R(v',v) = \sum\limits_{\rho = v',\ldots,v}W_\R(\rho)$, then:
\begin{align*}
\alpha_{(v,v_u)} &= (\alpha_{v_1}(d_1) + \alpha_{v_2}(d_2)) W_\R(v',v) W_\R(v,v_u),\\
\gamma_\R(d_1) &= \alpha_{v_1}(d_1) W_\R(v',v) W_\R(v,v_u).
\end{align*}
Since $\alpha_{v_1} = \alpha_{v_1}(d_1)$ and $\alpha_{v_2} = \alpha_{v_1}(d_2)$, then $\gamma_\R(d_1) = \frac{\alpha_{v_1} \alpha_{(v,v_u)}}{\alpha_{v_1} + \alpha_{v_2}}$. This concludes the proof.
\end{proof}
The above proposition similarly holds for $\boldsymbol\alpha_\O$, $\boldsymbol\gamma_\O$ and $\bold{w_\O}$. A naive way to efficiently solve Problem \ref{probMCNOptimizationProblem} is by removing the dynamics on the networks:
\begin{proposition}
Let $|\DR| = 1$ and $|\DO| = 1$, then Problem \ref{probMCNOptimizationProblem} is a convex optimization problem with respect to the variables $\bold{c}$ and $\bold{d}$.
\end{proposition}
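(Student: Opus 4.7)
The plan is to exploit the fact that when $|\DR|=|\DO|=1$, both network transfer functions degenerate to pure delays with scalar gains, $G_\R(z) = \gamma_\R/z^{\DDR}$ and $G_\O(z) = \gamma_\O/z^{\DDO}$, so that $N_{G_\R} = \gamma_\R$ and $N_{G_\O} = \gamma_\O$ are constants. Since the statement fixes the weight functions as parameters and only varies $\bold{c}$ and $\bold{d}$, the two scalars $\gamma_\R$ and $\gamma_\O$ are constants, and therefore all the bilinear couplings between the controller variables $\bold{d}$ and the weight variables $\bold{w_\R}, \bold{w_\O}$ that appear in Propositions~\ref{propHardL2Norm}--\ref{propHardRate} collapse to simple scalar prefactors.

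First I would verify that each building block becomes affine in the decision variables. The polynomial $D_C = (z-1)(z^m + c_{m-1}z^{m-1} + \ldots + c_0)$ is affine in $\bold{c}$, and $N_{C'} = d_s z^s + \ldots + d_0$ is affine in $\bold{d}$. Substituting $N_{G_\R} = \gamma_\R$ and $N_{G_\O} = \gamma_\O$ in the deadbeat equation \eqref{eqDeadbeatControl} gives
\[
D_C D_{P'} + \gamma_\R \gamma_\O\, N_{C'} N_P = z^l,
\]
so that matching coefficients of $z^0, \ldots, z^{l-1}$ yields $l$ \emph{linear} equalities in $(\bold{c}, \bold{d})$. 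From \eqref{eqParametersOutput}, the output parameters $q_h = \gamma_\R \sum_{i+j=h} b_i d_j$ are affine in $\bold{d}$, and an analogous computation for $U(z)$ (which, after the deadbeat cancellation, reduces to a product of the fixed polynomials $M(z), D_{P'}(z)$ and the affine polynomial $N_{C'}(z)$ times $R(z)$) shows that each sample $u(k)$ is also affine in $\bold{d}$.

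Next I would translate affinity into convexity. Expression \eqref{eqL2normFunction} represents $\|e(k)\|^2_{\mathcal L_2}$ as a finite sum of squares of expressions affine in $\bold{d}$, hence a convex quadratic in $(\bold{c}, \bold{d})$. The overshoot formula \eqref{eqOvershootOutput} (and its analogue for $u(k)$) reduces the bounds $O_y \leq \overline{O}_y$, $O_u \leq \overline{O}_u$ to finitely many inequalities of the form $|\mathrm{affine}(\bold{d})| \leq \overline{O}_y$ and $|\mathrm{affine}(\bold{d})| \leq \overline{O}_u$, each of which is convex. By Proposition~\ref{propHardRate}, constraint~3 depends only on $\bold{w_\R}$ and $\bold{w_\O}$, which are fixed, so it is inert with respect to the decision variables. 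Putting the pieces together, the problem minimizes a convex quadratic subject to linear equalities and convex piecewise-linear inequalities, which is convex.

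The main obstacle I foresee is a nondegeneracy check rather than any genuine difficulty: one must argue that the fixed weights produce nonzero $\gamma_\R, \gamma_\O$ (otherwise the deadbeat equation has no solution and the feasible set is empty) and that those weights already satisfy constraint~3. Beyond this verification, the proof is essentially a direct substitution, exposing how the general multilinear structure of Problem~\ref{probMCNOptimizationProblem} collapses to a pair of multiplicative scalars once each delay set is a singleton.
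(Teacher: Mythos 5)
Your proposal is correct and follows essentially the same route as the paper: with $|\DR|=|\DO|=1$ the network transfer functions contribute only scalar gains, so the objective becomes a convex quadratic and constraints 1--2 become linear equalities and absolute values of affine functions in $(\bold{c},\bold{d})$, while constraint 3 involves only the weights. The one small difference is that you treat $\bold{w_\R},\bold{w_\O}$ as fixed parameters and flag that one must check they satisfy constraint 3, whereas the paper keeps them as design variables and invokes statement (i) of Lemma \ref{lemAlphaGammaWdataRateOnLinks} to argue the data rates can always be assigned independently, so the weight design decouples rather than being assumed feasible.
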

\begin{proof}
If $|\DR| = 1$ and $|\DO| = 1$, by Propositions \ref{propHardL2Norm}-\ref{propHardRate} it follows that the absence of $\boldsymbol\gamma_\R$ and $\boldsymbol\gamma_\O$ decouples the optimization variables $\bold{c}$, $\bold{d}$ and the variables $\bold{w_\R}$ and $\bold{w_\O}$ of the data rate constraints. By statement (i) of Proposition \eqref{eqDelayDisjoint}, it follows that the data rates can be arbitrarily assigned by designing appropriately $\bold{w_\R}$ and $\bold{w_\O}$. Finally, the objective function and the constraints 1 and 2 of Problem \ref{probMCNOptimizationProblem} become convex with respect to $\bold{c}$ and $\bold{d}$.
\end{proof}
However, as we illustrated in Example \ref{DelayExample}, the case $|\DR| = 1$ and $|\DO| = 1$ is not easily achievable. For this reason, we consider the following relaxation of Problem \ref{probMCNOptimizationProblem}:
\begin{problem}\label{probMCNOptimizationRelaxedProblem}
Let $(\bold{d}^*, \boldsymbol\alpha^*_\R)$ be the Pareto optimal value of the following optimization problem:
$$
\min\limits_{\bold{d}, \boldsymbol{\alpha_\R}}{||e(k)||^2_{\mathcal L_2}}
$$
subject to:
\begin{description}
\item[2)] $O_u \leq \overline{O}_u$, $O_y \leq \overline{O}_y$;\\

\item[3a)] $R_e \leq \bar{R}_e$, $\forall e \in E_\R$.

\end{description}
Given $\bold{d}=\bold{d}^*$ and $\boldsymbol{\alpha}_\R = \boldsymbol{\alpha}_\R^*$, design $\bold{c}$ and $\boldsymbol\alpha_\O$ such that the following hold:
\begin{description}
\item[1)] $D_C D_{P'} + N_{C'} N_{G_\R} N_P N_{G_\O}=z^{l}$;
\item[3b)] $R_e \leq \bar{R}_e, \forall e \in E_\O$.
\end{description}
\end{problem}
If Problem \ref{probMCNOptimizationRelaxedProblem} is feasible, then Problem \ref{probMCNOptimizationProblem} is feasible as well, and the Pareto optimal values of Problem \ref{probMCNOptimizationProblem} and of the first part of Problem \ref{probMCNOptimizationRelaxedProblem} are equal. This is due to the fact that the variables $\bold{c}$ and $\boldsymbol\gamma_\O$ do not appear in the objective function. Using the first computational model, we cannot state a priori conditions that guarantee feasibility of the second part of Problem \ref{probMCNOptimizationRelaxedProblem}: we will show that this is possible using the second computational model. If instead Problem \ref{probMCNOptimizationRelaxedProblem} is not feasible, then Problem \ref{probMCNOptimizationProblem} might be feasible, and the Pareto optimal value of the first part of Problem \ref{probMCNOptimizationRelaxedProblem} is a lower bound for the Pareto optimal value of Problem \ref{probMCNOptimizationProblem} (if they exist). This is due to the fact that, splitting the Problem \ref{probMCNOptimizationProblem}, we are neglecting some feasible solutions that require the joint design of all optimization variables.
\begin{proposition}\label{propWpositive}
Let $\bold{w_\R} > 0$, $\bold{w_\O} >0$, let \eqref{eqDelayDisjoint} hold both for $G_\R$ and $G_\O$, and let $|\bold{d}|=1$ (namely the numerator of the controller is a polynomial of zero degree). Using the computational model 1, the first part of Problem \ref{probMCNOptimizationRelaxedProblem} is a convex optimization problem with respect to $\boldsymbol\alpha_\R$, and the second part is a system of linear equations with linear constraints, with respect to $\bold{c}$ and $\boldsymbol\alpha_\O$.
\end{proposition}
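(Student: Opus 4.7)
The strategy is to exploit positivity of the weights together with the delay-disjointness condition \eqref{eqDelayDisjoint} to collapse the otherwise polynomial structure of Problem~\ref{probMCNOptimizationProblem} into, respectively, a convex quadratic program and a linear feasibility problem.

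\textbf{First part.} Lemma~\ref{lemAlphaGammaWdataRateOnLinks}(ii) applied to $G_\R$ under \eqref{eqDelayDisjoint} yields that each $\gamma_\R(d)$ is a linear form in $\boldsymbol\alpha_\R$. Substituting $|\mathbf{d}|=1$ into \eqref{eqParametersOutput} gives $q_h=d_0\,L_h(\boldsymbol\alpha_\R)$ with $L_h$ linear, so both $\|e(k)\|^2_{\mathcal L_2}$ in \eqref{eqL2normFunction} and the overshoot expressions \eqref{eqOvershootOutput} depend on $(d_0,\boldsymbol\alpha_\R)$ only through the product $d_0\boldsymbol\alpha_\R$, while the data-rate constraint \eqref{eqAlphaConstraints} depends only on the ratios $\alpha_e/\alpha_{\bar e}$. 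I would therefore fix the convention $d_0=1$, absorbing the scalar $d_0$ into the weights via the scaling invariance $d_0\mapsto d_0/\lambda$, $\boldsymbol\alpha_\R\mapsto\lambda\boldsymbol\alpha_\R$, and work in the single variable $\boldsymbol\alpha_\R$. Because $\mathbf{w_\R}>0$ forces $\alpha_e>0$ through \eqref{eqAlphaDef}, the absolute values in \eqref{eqAlphaConstraints} disappear. After this reduction the objective becomes a sum of squares of affine forms in $\boldsymbol\alpha_\R$, hence convex quadratic; each overshoot inequality $|\cdot|\le\bar O$ splits into a pair of linear inequalities (its argument being affine in $\boldsymbol\alpha_\R$); and each data-rate inequality collapses to $\alpha_e-\kappa_e\alpha_{\bar e}\le 0$. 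The first part is therefore a convex QP.

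\textbf{Second part.} With $\mathbf{d}^*$ and $\boldsymbol\alpha_\R^*$ fixed, the polynomials $N_{C'}=d_0^*$, $N_{G_\R}$ (through $\boldsymbol\gamma_\R$), $N_P$ and $D_{P'}$ are all known. Since $D_C=(z-1)(z^m+c_{m-1}z^{m-1}+\cdots+c_0)$ is linear in $\mathbf{c}$, and since by Lemma~\ref{lemAlphaGammaWdataRateOnLinks}(ii) applied to $G_\O$ the coefficients $\gamma_\O(\iota)$ of $N_{G_\O}$ are linear in $\boldsymbol\alpha_\O$, the left-hand side $D_CD_{P'}+N_{C'}N_{G_\R}N_PN_{G_\O}$ of the deadbeat identity is jointly linear in $(\mathbf{c},\boldsymbol\alpha_\O)$. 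Matching the $l$ coefficients of $z^0,\dots,z^{l-1}$ against zero therefore produces $l$ scalar linear equations. Constraint~3b reduces to $\alpha_e-\kappa_e\alpha_{\bar e}\le 0$ by the same argument as for 3a, with $\mathbf{w_\O}>0$ guaranteeing $\alpha_e>0$ on the observability graph. Hence the second part is a system of linear equations with linear inequality constraints in $(\mathbf{c},\boldsymbol\alpha_\O)$.

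\textbf{Main obstacle.} The crucial step is the scale reduction in the first part: proving that the joint bilinear dependence of $q_h$ on $(d_0,\boldsymbol\alpha_\R)$ can be collapsed, without altering the Pareto-optimal set, into a purely linear dependence on the single variable $\boldsymbol\alpha_\R$. This hinges on carefully verifying the rescaling invariance of every constraint (objective, overshoots, and data rates) and on using positivity of $\mathbf{w_\R}$ together with a sign convention on $d_0$ to strip the absolute values from \eqref{eqAlphaConstraints}; without these hypotheses one obtains a non-convex difference-of-convex program rather than a convex QP.
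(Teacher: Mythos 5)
Your proof is correct and follows essentially the same route as the paper: Lemma~\ref{lemAlphaGammaWdataRateOnLinks}(ii) linearizes $\boldsymbol\gamma_\R$ in $\boldsymbol\alpha_\R$, $|\mathbf{d}|=1$ makes the objective and overshoot constraints convex, positivity of the weights strips the absolute values from \eqref{eqAlphaConstraints}, and fixing $(\mathbf{d}^*,\boldsymbol\alpha_\R^*)$ renders constraint 1 linear in $(\mathbf{c},\boldsymbol\alpha_\O)$. The only difference is that you make explicit the scaling invariance $d_0\mapsto d_0/\lambda$, $\boldsymbol\alpha_\R\mapsto\lambda\boldsymbol\alpha_\R$ that lets the bilinear products $d_0\gamma_\R(\kappa)$ be treated as linear in $\boldsymbol\alpha_\R$ alone — a detail the paper's proof leaves implicit — which strengthens rather than alters the argument.
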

\begin{proof}
Since $|\bold{d}|=1$ and each component of $\boldsymbol\gamma_\R$ is a sum of components of $\boldsymbol\alpha_\R$ by Lemma \ref{lemAlphaGammaWdataRateOnLinks}, then the quadratic $\mathcal L_2$ norm of the error signal and constraint 2 are convex with respect to $\boldsymbol\alpha_\R$. Since $\bold{w_\R} > 0$, $\bold{w_\O} >0$, then constraint 3a is linear constraint with respect to $\boldsymbol\alpha_\R$. Finally, since $\bold{d}=\bold{d}^*$ and $\boldsymbol{\alpha}_\R = \boldsymbol{\alpha}_\R^*$, constraint 1 is a system of linear equations in the variables $\bold{c}$ and $\boldsymbol\alpha_\O$, and constraint 3 is linear with respect to $\boldsymbol\alpha_\O$.
\end{proof}
The above proposition is based on the assumption that $\bold{w_\R} > 0, \bold{w_\O} >0$. If on one hand this makes Problem \ref{probMCNOptimizationProblem} convex, on the other hand it implies that $\boldsymbol\gamma_\R > 0, \boldsymbol\gamma_\O > 0$, which reduces the set of designable dynamics for $G_\R(z)$ and $G_\O(z)$. Therefore, we might need to use more complex controllers and observability networks in order to make Problem \ref{probMCNOptimizationProblem} feasible. It is also assumed that links belonging to paths characterized by different delays never merge in the same node, which restricts the set of admitted network topologies and scheduling functions. We will show that, if the communication nodes use the computational model 2, we do not need to raise these two assumptions.

\textbf{Computational model 2:} Problems \ref{probMCNOptimizationProblem} and \ref{probMCNOptimizationRelaxedProblem} can be formalized as above, by replacing the date rate constraint with $R_v \leq \bar{R}_v, \forall v \in V_\R \cup V_\O \setminus \{v_u,v_c\}$.
\begin{lemma}\label{lemComp2}
Using the computational model 2, and for any given $\boldsymbol\gamma_\R$ and $\boldsymbol\alpha_\R$, it is always possible to design $\bold{w_\R}$ such that equations \eqref{eqAlphaDef2} and \eqref{eqRelationGammaAlpha2} are satisfied.
\end{lemma}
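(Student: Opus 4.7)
The plan is a constructive argument that exploits the acyclicity of $G_\R$ and the fact that the weights on edges entering $v_u$ are decoupled from the node variables $\alpha_v$ (since $\alpha_{v_u}$ is undefined). I would proceed by a topological sweep from $v_c$ to $v_u$, using the internal edges to match $\boldsymbol\alpha_\R$ and the final layer of edges into $v_u$ to match $\boldsymbol\gamma_\R$.

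First, I would fix a topological order $v_c = v_1, v_2, \ldots, v_N = v_u$ of $V_\R$, which exists because $G_\R$ is acyclic. For each intermediate node $v \in \{v_2, \ldots, v_{N-1}\}$ visited in this order, aggregating \eqref{eqAlphaDef2} over $d$ gives the scalar recursion
$$ \alpha_v \;=\; \sum_{v' \in pre(v)} W_\R(v', v)\, \alpha_{v'}, $$
in which every $\alpha_{v'}$ for $v' \in pre(v)$ has already been set to its target value by a previous step of the sweep. This is a single linear equation in the $|pre(v)| \geq 1$ unknowns $W_\R(v',v)$, and since $\alpha_{v_c}=1$ propagates at least one nonzero predecessor value by induction, a solution always exists (with strictly positive slack whenever $|pre(v)| \geq 2$). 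Thus every target $\alpha_v$, and as a byproduct every induced per-delay component $\alpha_v(d)$, is realized.

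Second, the weights $W_\R(v, v_u)$ with $v \in pre(v_u)$ remain completely free after the first pass, because they enter no equation \eqref{eqAlphaDef2}. Substituting them into \eqref{eqRelationGammaAlpha2} reduces the problem to solving the linear system
$$ \gamma_\R(d) \;=\; \sum_{v \in pre(v_u)} W_\R(v, v_u)\, \alpha_v(d), \qquad d \in \DR, $$
in the $|pre(v_u)|$ unknowns $W_\R(v, v_u)$. The main obstacle will be showing that this system is always solvable for an arbitrary target $\boldsymbol\gamma_\R$. My plan is to absorb the residual freedom from the first step: whenever a node along a path to $v_u$ has $|pre(v)| \geq 2$, I would redistribute the weights among predecessors lying on paths of distinct delays so that the $|\DR| \times |pre(v_u)|$ matrix $\bigl[\alpha_v(d)\bigr]_{d,v}$ attains full row rank. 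The key enabling fact is that each $d \in \DR$ is, by definition of $\DR$, realized by at least one complete $v_c$-to-$v_u$ path, so a generic choice of the free parameters in the sweep makes the rows linearly independent. The final system then admits a solution for any $\boldsymbol\gamma_\R$, and the construction delivers weights $\bold{w_\R}$ satisfying both \eqref{eqAlphaDef2} and \eqref{eqRelationGammaAlpha2}.
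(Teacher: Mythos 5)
Your first step (a topological sweep that assigns each aggregate $\alpha_v$ through the single linear equation $\alpha_v=\sum_{v'\in pre(v)}W_\R(v',v)\alpha_{v'}$ in $|pre(v)|$ unknowns, leaving the edges into $v_u$ untouched) is exactly the paper's first step. The gap is in your second step. You reduce the assignment of $\boldsymbol\gamma_\R$ to the linear system $\gamma_\R(d)=\sum_{v\in pre(v_u)}W_\R(v,v_u)\,\alpha_v(d)$, $d\in\DR$, and propose to guarantee its solvability by tuning the leftover upstream freedom so that the $|\DR|\times|pre(v_u)|$ matrix $[\alpha_v(d)]$ has full row rank. That is impossible whenever $|pre(v_u)|<|\DR|$, since the rank is at most $|pre(v_u)|$, and this case genuinely occurs (e.g.\ two paths of different delays that merge at the unique predecessor of $v_u$). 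In that regime no choice of the final-layer weights alone can realize an arbitrary $\boldsymbol\gamma_\R$: one must design the products $W_\R(v,v_u)\alpha_v(d)$ jointly, using the freedom to split each fixed aggregate $\alpha_v$ into its per-delay components $\alpha_v(d)$, which makes the residual system bilinear rather than linear. The paper handles precisely this by a case split on $|pre(v_u)|\geq|\DR|$ versus $|pre(v_u)|<|\DR|$, and in the second case by a counting argument: a node $v\in pre(v_u)$ lying on paths of $\mu$ distinct delays carries $\mu-1$ free merge weights upstream plus the weight $W_\R(v,v_u)$, i.e.\ $\mu$ degrees of freedom for the $\mu$ quantities $W_\R(v,v_u)\alpha_v(d)$. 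Your proposal needs this missing case (or an equivalent argument) before it is a proof.

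A minor additional point: in the sweep, the claim that a solution "always exists because $\alpha_{v_c}=1$ propagates a nonzero predecessor value" is not quite right, because the predecessor values $\alpha_{v'}$ are prescribed targets rather than quantities you control; if all prescribed $\alpha_{v'}$, $v'\in pre(v)$, vanish while the target $\alpha_v$ does not, the single equation is infeasible. The paper's own argument silently assumes this degeneracy away as well, so I would not count it against you, but it is worth a remark.
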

\begin{proof}
It is easy to derive from \eqref{eqAlphaDef} that, for all $v \in V_\R \setminus \{v_c,v_u\}$, $\alpha_v = \sum\limits_{\bar v \in pre(v)} W(\bar v, v) \alpha_{\bar v}$. Since the graph is acyclic, and reasoning as in Lemma \ref{lemAlphaGammaWdataRateOnLinks}, it follows that given $\boldsymbol\alpha_\R$ it is possible to assign $\alpha_v$, $\forall v \in V_\R \setminus \{v_c,v_u\}$ by designing $W(v', v)$ for just one $v' \in pre(v)$. All the remaining $W(\bar v, v)$, $\bar v \in pre(v) \setminus v'$ can be considered as free variables, that we will use to design $\boldsymbol \gamma_\R$. Let $pre(v_u) = \bold{F}$: note that the weights $W(v,v_u)$, $v \in \bold{F}$ have not been used to assign the vector $\boldsymbol\alpha_\R$, therefore they can also be considered as free variables to design $\boldsymbol \gamma_\R$.

We claim that it is always possible to assign the vector $\boldsymbol\gamma_\R$ using the free variables defined above.
If $|\bold{F}| \geq |\DR|$, then it is clearly possible to assign $\boldsymbol \gamma_\R$ by arbitrarily choosing any $|\DR|$ weights $W(v,v_u)$, $v \in \bold{F}$.

Assume instead that $|\bold{F}| < |\DR|$. Let $\DR^1$ be the set of delays $d \in \DR$ such that there exists at least one node $v \in \bold{F}$ with $\alpha_v = \alpha_v(d)$, i.e. one node that belongs to paths characterized by a unique delay. By definition of $\DR^1$ it follows that $|\bold{F}| \geq |\DR^1| + 1$, since all paths associated to delays in $\DR^1$ do not merge before reaching node $v_u$. Let $\bold{F_1} \subseteq \bold{F}$ be the set of nodes belonging to paths characterized by a unique delay: it follows that we can assign $\gamma_\R(d)$ for all $d \in \DR^1$ by designing the weight $W_\R(v,v_u)$, for an arbitrary $v \in \bold{F_1}$ such that $\alpha_v = \alpha_v(d)$.

Let $\DR^2 = \DR \setminus \DR^1$, and let $\bold{F_2} = \bold{F} \setminus \bold{F_1}$. Any node $v \in \bold{F_2}$ belongs to paths characterized by a finite number $\mu \leq |\DR^2|$ of different delays. However, as discussed above, this implies that there exist at least $\mu$ free variables associated to $W(v,v_u) \alpha_v$, i.e. $\mu - 1$ for all merges of different delays and one more for the variable $W_\R(v,v_u)$. By repeating this reasoning for all $v \in \bold{F_2}$, it follows that we have enough free variables to assign $\gamma_\R(d)$ for all $d \in \DR^2$.
\end{proof}
By the above lemma it follows that constraints 3a and 3b are decoupled from Problem \ref{probMCNOptimizationRelaxedProblem}, and is therefore always possible to design $\bold{w}_\R$ and $\bold{w}_\O$ to assign the optimal data rates both in the controllability and observability graphs as illustrated in Section \ref{secMCNDataRateDef}.
\begin{proposition}\label{propComp2ConvexityConditions}
Let $|\bold{d}|=1$. Using the computational model 2, the first part of Problem \ref{probMCNOptimizationRelaxedProblem} is a convex optimization problem with respect to $\boldsymbol\gamma_\R$, and the second part is a system of linear equations with respect to $\bold{c}$ and $\boldsymbol\gamma_\O$.
\end{proposition}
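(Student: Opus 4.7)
The plan is to use Lemma \ref{lemComp2} to decouple the data rate constraints from the rest of the first part of Problem \ref{probMCNOptimizationRelaxedProblem}, and then to exploit the assumption $|\bold{d}|=1$ to reduce the remaining objective and constraints to convex functions of $\boldsymbol\gamma_\R$. For the second part, with $\bold{d}^*$ and $\boldsymbol\gamma_\R^*$ treated as constants, the same lemma decouples the observability data rate constraint, and the deadbeat equation collapses into a linear system in $\bold{c}$ and $\boldsymbol\gamma_\O$.

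First I would address the objective and the overshoot constraints of the first part. By Lemma \ref{lemComp2}, the vectors $\boldsymbol\gamma_\R$ and $\boldsymbol\alpha_\R$ can be independently assigned by appropriately designing $\bold{w_\R}$, so I can treat them as free, independent variables. The data rate constraint (3a) depends only on $\boldsymbol\alpha_\R$ and, as shown in \eqref{eqAlphaConstraints}, reduces to a set of linear inequalities in $\boldsymbol\alpha_\R$; since it is decoupled from the objective and from constraint (2), it can be handled separately as a linear feasibility problem. The objective and the overshoot constraint depend on $\bold{d}$ and $\boldsymbol\gamma_\R$ through the parameters $q_h$ in \eqref{eqParametersOutput}. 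Since $|\bold{d}|=1$, $\bold{d}$ is a single scalar $d_0$, and $q_h = \sum_{i+(\DDR-\kappa)=h} b_i\, d_0\, \gamma_\R(\kappa)$ is bilinear in $d_0$ and $\boldsymbol\gamma_\R$.

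Here is where the hypothesis of the proposition is crucial: because $\boldsymbol\gamma_\R$ is freely assignable by Lemma \ref{lemComp2}, the scaling by $d_0$ can be absorbed into $\boldsymbol\gamma_\R$ (equivalently, I fix $d_0=1$ without loss of generality), so each $q_h$ becomes affine in $\boldsymbol\gamma_\R$. Then the objective $\|e(k)\|^2_{\mathcal L_2}$ in \eqref{eqL2normFunction} is a sum of squares of affine functions of $\boldsymbol\gamma_\R$, hence a convex (positive semidefinite) quadratic form; the overshoot bound \eqref{eqOvershootOutput} is an $\ell_\infty$ bound on an affine function of $\boldsymbol\gamma_\R$, equivalent to finitely many linear inequalities. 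Thus the first part is a convex optimization problem in $\boldsymbol\gamma_\R$, combined with an independent linear feasibility problem in $\boldsymbol\alpha_\R$ for the data rate.

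For the second part, I would substitute $\bold{d}=\bold{d}^*$ and $\boldsymbol\gamma_\R=\boldsymbol\gamma_\R^*$ into the expression for the coefficients $p_h$ of the deadbeat equation used in the proof of Proposition \ref{propHardDeadbeat}, namely $p_h = \sum b_i d_j \gamma_\R(\kappa) \gamma_\O(\iota) + \sum (a_i + c_j)$. With $d_j$ and $\gamma_\R(\kappa)$ now constants, each $p_h$ is linear in $\bold{c}$ and $\boldsymbol\gamma_\O$, so \eqref{eqDeadbeatControl} reduces to a system of linear equations in these variables, while Lemma \ref{lemComp2} applied to $G_\O$ turns the data rate constraint (3b) into a separate set of linear inequalities in $\boldsymbol\alpha_\O$. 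The main obstacle I expect is making the absorption of $d_0$ into $\boldsymbol\gamma_\R$ rigorous: it is precisely the freedom granted by Lemma \ref{lemComp2} under the second computational model that removes the bilinearity and avoids the side-conditions $\bold{w_\R}>0$ and \eqref{eqDelayDisjoint} needed in Proposition \ref{propWpositive}.
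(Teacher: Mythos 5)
Your proposal is correct and follows essentially the same route as the paper's proof: Lemma \ref{lemComp2} decouples the data rate constraints (3a, 3b), the assumption $|\bold{d}|=1$ renders the objective and constraint 2 convex in $\boldsymbol\gamma_\R$, and fixing $\bold{d}^*$, $\boldsymbol\gamma_\R^*$ makes constraint 1 linear in $\bold{c}$ and $\boldsymbol\gamma_\O$. Your explicit handling of the bilinear term $d_0\gamma_\R(\kappa)$ --- absorbing $d_0$ into the freely assignable $\boldsymbol\gamma_\R$ --- is a detail the paper leaves implicit, and it is a valid and welcome clarification rather than a different argument.
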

\begin{proof}
If $|\bold{d}|=1$, by Lemma \ref{lemComp2} it directly follows that the quadratic $\mathcal L_2$ norm of the error signal and constraint 2 are convex with respect to $\boldsymbol\gamma_\R$, and that constraint 1 is a system of linear equations in the variables $\bold{c}$ and $\boldsymbol\gamma_\O$. Moreover, by Lemma \ref{lemComp2} it follows that  constraints 3a and 3b are decoupled from the objective function and all constraints of Problem \ref{probMCNOptimizationRelaxedProblem}, and therefore they disappear.
\end{proof}
The following proposition provides a sufficient condition on topology, scheduling and routing of the observability graph, such that Problem \ref{probMCNOptimizationProblem} and Problem \ref{probMCNOptimizationRelaxedProblem} are equivalent.
\begin{proposition}
Let $|\bold{d}|=1$ and $|\DO| \geq r+1$. Using the computational model 2, Problem \ref{probMCNOptimizationProblem} and Problem \ref{probMCNOptimizationRelaxedProblem} are equivalent.
\end{proposition}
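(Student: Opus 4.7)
The plan is to establish equivalence by proving that, under the hypotheses, the second part of Problem \ref{probMCNOptimizationRelaxedProblem} is always feasible whenever its first part admits a solution. Combined with the observation recorded in the paragraph preceding Proposition \ref{propWpositive} (namely, that feasibility of Problem \ref{probMCNOptimizationRelaxedProblem} implies feasibility of Problem \ref{probMCNOptimizationProblem} with the same Pareto optimum, since $\bold{c}$ and $\boldsymbol\gamma_\O$ are absent from the objective), this yields the desired equivalence.

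First I would invoke Lemma \ref{lemComp2}: under computational model 2, for any target $\boldsymbol\gamma_\O$ and any target $\boldsymbol\alpha_\O$ satisfying constraint 3b, a weight vector $\bold{w_\O}$ realizing both simultaneously exists. Hence constraint 3b decouples from constraint 1, and feasibility of the second part of Problem \ref{probMCNOptimizationRelaxedProblem} reduces to solvability of constraint 1 in $\bold{c}$ and $\boldsymbol\gamma_\O$ with $(\bold{d}^{*},\boldsymbol\alpha_\R^{*})$, and thus $\boldsymbol\gamma_\R^{*}$, fixed.

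Next I would exploit $|\bold{d}|=1$. With $N_{C'}=d_0^{*}$, the deadbeat identity
$$D_C(\bold{c})\,D_{P'}(z) + d_0^{*}\,N_{G_\R^{*}}(z)\,N_P(z)\,N_{G_\O}(\boldsymbol\gamma_\O) = z^{l}$$
is, by Proposition \ref{propComp2ConvexityConditions}, linear in the unknowns $\bold{c}\in\mathbb R^{m}$ and $\boldsymbol\gamma_\O\in\mathbb R^{|\DO|}$. Since $D_C D_{P'}$ is monic of degree $l=m+r+1$ and dominates the other term in degree, the leading coefficients match automatically, and only the remaining $l$ subleading coefficients yield scalar equations, for a total of $l$ constraints on $m+|\DO|$ unknowns. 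The hypothesis $|\DO|\geq r+1$ therefore supplies $m+|\DO|\geq l$ unknowns, matching or exceeding the equation count.

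The main obstacle is upgrading this dimension count to actual consistency of the linear system. I would handle this via a polynomial Bezout argument: setting $\hat A(z)=(z-1)D_{P'}(z)$ of degree $r+1$ and $\hat B(z)=d_0^{*}N_{G_\R^{*}}(z)N_P(z)$, the deadbeat equation becomes $\hat A\,\tilde D_C+\hat B\,N_{G_\O}=z^{l}$, with $\tilde D_C$ monic of degree $m$ encoding $\bold{c}$. Under the standing controllability and observability of the plant we have $\gcd(D_{P'},N_P)=1$, and any Pareto optimum $\boldsymbol\gamma_\R^{*}$ of the first part must avoid cancellations between $N_{G_\R^{*}}$ and the unstable roots of $D_{P'}$, since otherwise the overshoot $O_y$ diverges as illustrated in Example \ref{exOvershoot}. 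Consequently $\gcd(\hat A,\hat B)$ divides $z^{l}$, and a Bezout pair exists. The $|\DO|-r$ surplus degrees of freedom, guaranteed precisely by $|\DO|\geq r+1$, can then be spent to reconcile the generic Bezout solution with the structural constraints that $\tilde D_C$ be monic of degree $m$ and that $N_{G_\O}$ have support exactly on $\{\DDO-d:d\in\DO\}$, completing the argument.
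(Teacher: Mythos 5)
Your overall route coincides with the paper's: decouple constraint 3b via Lemma \ref{lemComp2}, observe that $\bold{c}$ and $\boldsymbol\gamma_\O$ occur only in constraint 1, and reduce everything to solvability of the linear deadbeat system, where the count of $l$ equations against $m+|\DO|$ unknowns is exactly what the hypothesis $|\DO|\geq r+1$ buys. Up to that point your argument is, if anything, more explicit than the paper's, which simply asserts that $|\DO|\geq r+1$ ``guarantees the existence of a solution.'' The trouble lies in the two steps you add in order to upgrade the dimension count to actual consistency.

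First, the claim that the Pareto optimum of the first part must avoid cancellations between $N_{G_\R^*}$ and the unstable roots of $D_{P'}$ ``since otherwise $O_y$ diverges'' does not follow. In the first part of Problem \ref{probMCNOptimizationRelaxedProblem} the variables $\bold{d}$ and $\boldsymbol\alpha_\R$ are independent, and the overshoot is evaluated through \eqref{eqOvershootOutput} via the coefficients $q_h$ of \eqref{eqParametersOutput}, which remain finite polynomials in $\bold{d}$ and $\boldsymbol\gamma_\R$ at a cancellation configuration; the divergence in Example \ref{exOvershoot} appears only after the deadbeat equation has been solved for the controller coefficients, i.e., only once constraint 1 is coupled back in, which is precisely what the first part omits. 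So coprimality of your $\hat A$ and $\hat B$ at the optimum is an unproven premise of the Bezout step, not a consequence of the overshoot constraint. Second, even granting coprimality, Bezout yields \emph{some} pair $(X,Y)$, whereas you need $Y=N_{G_\O}$ supported exactly on the positions $\{\DDO-d:d\in\DO\}$ (which may have gaps) and $X$ monic of degree $m$; when $|\DO|=r+1$ there is no surplus freedom at all, and the asserted ``reconciliation'' amounts to the nonsingularity of a square, structured Sylvester-type system, which is exactly the point at issue and is not automatic. To be fair, the paper's own proof does not establish this either --- it treats the equation/unknown count as sufficient --- so your attempt exposes a gap already present in the original argument rather than introducing a new one; but as written it does not close that gap.
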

\begin{proof}
The condition $|\DO| \geq r+1$ guarantees the existence of a solution on the variables $\bold{c}$ and $\boldsymbol\gamma_\O$ for the system of linear equations defined by constraint 1. Since by Lemma \ref{lemComp2} the constraints 3, 3a and 3b disappear, and since the variables $\bold{c}$ and $\boldsymbol\gamma_\O$ only appear in constraint 1, then constraint 1 can be decoupled from Problem \ref{probMCNOptimizationProblem}, and can also be decoupled from the second part of Problem \ref{probMCNOptimizationRelaxedProblem}. As a consequence, Problem \ref{probMCNOptimizationProblem} and the first part of Problem \ref{probMCNOptimizationRelaxedProblem} become equivalent.
\end{proof}

\subsection{Scheduling design} \label{subsecDesignScheduling}

\begin{problem}\label{probSchedulingOptimizationProblem}
Given a MCN $\mathcal{N}$, let $\P$, $(V_{\R},E_{\R})$, $(V_{\O},E_{\O})$ and $\Delta$ be given. Design scheduling functions $\eta_{\R}$ and $\eta_{\O}$, a controller $C(z)$, and controllability and observability weight functions $W_\R$ and $W_\O$, to minimize the quadratic $\mathcal L_2$ norm of the error signal of the closed loop system, subject to the constraints of Problem \ref{probMCNOptimizationProblem}.
\end{problem}
The set $\Gamma$ of admissible scheduling functions is finite, and its cardinality is exponential with respect to the number of communication links and to the scheduling period. Therefore, Problem \ref{probSchedulingOptimizationProblem} can be solved by computing the Pareto optimal value $\mathcal L_2^*(\eta_\R, \eta_\O)$, $\forall (\eta_\R, \eta_\O) \in \Gamma$, using the results obtained in the above section. This induces a total ordering in the set $\Gamma$, and we can define the set of optimal scheduling functions
$$
\Gamma^* = \big\{(\eta_\R^*, \eta_\O^*) \in \Gamma \colon (\eta_\R^*, \eta_\O^*) = \underset{(\eta_\R, \eta_\O) \in \Gamma}{\operatorname{argmin}} \mathcal L_2^*(\eta_\R, \eta_\O)\big\}.
$$

\begin{example}\label{exFinalExample}
Let us consider a plant, a controllability graph $G_\R$, and parameters $\delta_u$, $\Delta$ and $U_{max}$ as in Example \ref{exOvershoot}. We also consider here an observability graph $G_\O$ equal to $G_\R$, and the following observability scheduling function

\scriptsize
\begin{align*}
\eta_\O(1) = \{&(v_1,v_2),(v_1,v_3),(v_1,v_4),(v_2,v_5),(v_2,v_7),\\
               &(v_3,v_5),(v_3,v_6),(v_3,v_7),(v_4,v_6),(v_4,v_7),(v_5,v_7),(v_6,v_7)\},
\end{align*}
\normalsize
that is associated to the following transfer function:
$$
G_\O(z)=\frac{\gamma_\O(2)z+\gamma_\O(3)}{z^3}.
$$
We consider the following three scheduling functions:
\scriptsize
\begin{align*}
\eta_\R^a(1) = \{&(v_1,v_2),(v_1,v_3),(v_1,v_4),(v_5,v_7),(v_6,v_7)\},\\
\eta_\R^a(2) = \{&(v_2,v_5),(v_2,v_7),(v_3,v_5),(v_3,v_6),(v_3,v_7),(v_4,v_6),(v_4,v_7)\},\\
\eta_\R^b(1) = \{&(v_1,v_2),(v_1,v_3),(v_1,v_4),(v_2,v_7),(v_3,v_7),(v_4,v_7),(v_5,v_7),\\
                 &(v_6,v_7)\},\\
\eta_\R^b(2) = \{&(v_2,v_5),(v_3,v_5),(v_3,v_6),(v_4,v_6),\},\\
\eta_\R^c(1) = \{&(v_1,v_2),(v_1,v_3),(v_1,v_4),(v_2,v_5),(v_3,v_5),(v_3,v_6),(v_4,v_6),\\
                 &(v_4,v_7),(v_6,v_7)\},\\
\eta_\R^c(2) = \{&(v_2,v_7),(v_3,v_7),(v_5,v_7)\},
\end{align*}
\normalsize
that are associated to the following transfer functions:
\begin{align*}
G_{\R^a}(z)&=\frac{\gamma_\R(1)z+\gamma_\R(2)}{z^2};\\
G_{\R^b}(z)&=\frac{\gamma_\R(2)}{z^2};\\
G_{\R^c}(z)&=\frac{\gamma_\R(1)z^2+\gamma_\R(2)z+\gamma_\R(3)}{z^3}.
\end{align*}
Let $\bar O_u = \bar O_y = 10$, and $1.5\,kHz\leq\bar R_e\leq3\,kHz,\ \forall\ e\in E_\R \cup E_\O$. We chose a controller with $|\mathbf{d}| = 1$ and $|\mathbf{c}| = 4$ for $\eta_\R^a$ and $\eta_\R^b$, and with $|\mathbf{d}|=1$ and $|\mathbf{c}| = 5$ for $\eta_\R^c$.
We assumed that the communication nodes use the computational model 1, and solved Problem \ref{probMCNOptimizationProblem} for each scheduling using \texttt{CVX}, a package for specifying and solving convex programs \cite{gb08,CVX}.

\begin{figure}[ht]
\begin{center}
\vspace{-0.3cm}
\includegraphics[width=0.5\textwidth]{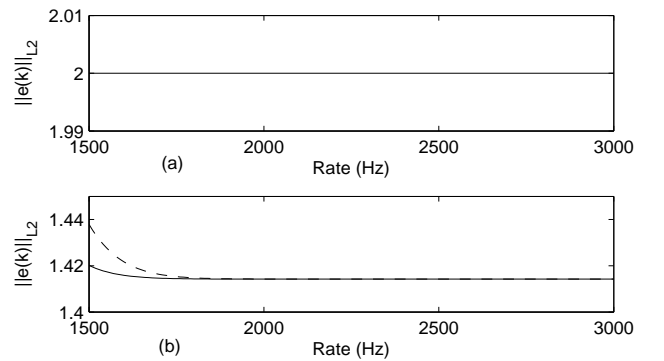}
\vspace{-0.7cm}
\caption{$\parallel e(k) \parallel_{\mathcal L_2}$ norm vs. maximum allowed data rate. (a) solid line: $\eta_\R^b$; (b) solid line: $\eta_\R^a$; (b) dashed line: $\eta_\R^c$.}\label{Rate_3casi}
\vspace{-0.5cm}
\end{center}
\end{figure}
Figure \ref{Rate_3casi} shows the $\mathcal L_2$ norm of the error as a function of the data rate bound $\bar R_e$, for the three scheduling defined above. Scheduling $\eta_\R^b$ is characterized by a single delay, and has the worst performance, probably because it introduces in the optimization problem only one degree of freedom. However, $\eta_\R^c$ performs worse than $\eta_\R^a$ despite the fact that it introduces more degrees of freedom, probably because the additional free variables are redundant to reach the minimum $\mathcal L_2$ norm and only increase the network delays leading to a worse performance. In conclusion, the scheduling that guarantees the best $\mathcal L_2$ norm while respecting the constraints is $\eta_\R^a$. Figure \ref{Rate_3casi} also shows that it is useless to increase the transmission data rate on the links to values greater than $1700 kHz$, since the performance does not improve after that point.
\end{example}


\section{Conclusions} \label{secConclusions}

Given a MCN, we addressed the problem of co-designing a digital controller and the network parameters (topology, scheduling and routing) to guarantee stability and maximize a performance metric on the transient response to a step input, with constraints on the bandwidth. We showed that the above problem is a polynomial optimization, which is generally NP-hard, and we provided sufficient conditions such that it reduces to a convex optimization.


\bibliographystyle{plain}
\bibliography{mcnbib}


\end{document}